\newcommandx{\unsure}[2][1=]{\todo[linecolor=red,backgroundcolor=red!25,bordercolor=red,#1]{#2}}
\newcommandx{\refrequest}[2][1=]{\todo[linecolor=blue,backgroundcolor=blue!25,bordercolor=blue,#1]{#2}}
\newcommandx{\note}[2][1=]{\todo[linecolor=OliveGreen,backgroundcolor=OliveGreen!25,bordercolor=OliveGreen,#1]{#2}}
\newcommandx{\improvement}[2][1=]{\todo[linecolor=Plum,backgroundcolor=Plum!25,bordercolor=Plum,#1]{#2}}
\newcommandx{\thiswillnotshow}[2][1=]{\todo[disable,#1]{#2}}
\DeclarePairedDelimiter\abs{\lvert}{\rvert}
\DeclareMathOperator{\Id}{Id}
\DeclareMathOperator{\diver}{div}
\newtheorem{theorem}{Theorem}
\newtheorem{lemma}{Lemma}
\newtheorem{prop}{Proposition}
\newtheorem{definition}{Definition}
\newtheorem*{remark}{Remark}
\renewcommand*{\Re}{\operatorname{Re}}
\renewcommand*{\Im}{\operatorname{Im}}
\renewcommand{\epsilon}{\varepsilon}
\title{Design of defected non-hermitian chains of resonator dimers for spatial and spatio-temporal localizations\thanks{\footnotesize
This work was supported in part by the Swiss National Science Foundation grant number
200021--200307.}}
\author{Habib Ammari\thanks{\footnotesize Department of Mathematics, ETH Z\"urich, R\"amistrasse 101, CH-8092 Z\"urich, Switzerland (habib.ammari@math.ethz.ch, thea.kosche@sam.math.ethz.ch).} \and Erik Orvehed Hiltunen\thanks{\footnotesize Department of Mathematics, Yale University, New Haven, USA (erik.hiltunen@yale.edu).} \and Thea Kosche\footnotemark[2]}
\date{}
\begin{document}
	\maketitle

\begin{abstract}
The aim of this article is to advance the field of metamaterials by proposing formulas for the design of high-contrast metamaterials with prescribed subwavelength defect mode eigenfrequencies. This is achieved in two settings: 
(i) design of non-hermitian static materials and (ii) design of instantly changing non-hermitian time-dependent materials.
The design of static materials is achieved via characterizing equations for the defect mode eigenfrequencies in the setting of a defected dimer material. These characterizing equations are the basis for obtaining formulas for the material parameters of the defect which admit given defect mode eigenfrequencies. Explicit formulas are provided in the setting of one and two given defect mode eigenfrequencies in the setting of a defected chain of dimers.
In the time-dependent case, we first analyze the influence of time-boundaries on the subwavelength solutions. We find that subwavelength solutions are preserved if and only if the material parameters satisfy a temporal Snell's law across the time boundary. The same result  also identifies the change of the time-frequencies uniquely.
Combining this result with those on the design of static 
materials, we obtain an explicit formula for the material design of instantly changing defected dimer materials which admit subwavelength modes with prescribed time-dependent defect mode eigenfrequency.
Finally, we use this formula to create materials which admit spatio-temporally localized defect modes.
   
\end{abstract}

\noindent{\textbf{Mathematics Subject Classification (MSC2000):} 35J05, 35C20, 35P20, 74J20}
		
\vspace{0.2cm}
		
\noindent{\textbf{Keywords:}} defect mode, defect mode eigenfrequency, non-hermitian metamaterial,  spatio-temporal localization, 
metamaterial design
	\vspace{0.5cm}
		
\tableofcontents

\section{Introduction}

In the classical setting, wave localization in space is typically constrained by the diffraction limit. Defects on length-scales smaller than the wavelength have a negligible effect on wave scattering. However, recent developments of \emph{subwavelength} wave localization are able to overcome this barrier \cite{ammari2021functional,sheinfux2017observation, herzig2016interplay,yves2017crytalline,sheng,experiment2013,alu_rev1} by making use of {locally resonant} periodic materials.
Such systems are periodic systems of \emph{high-contrast} resonators which exhibit {subwavelength} resonant frequencies \cite{ammari2018minnaert, phononic1}. They are typical examples of \emph{metamaterials}: materials with a repeating micro-structure that exhibit properties surpassing those of the individual building blocks \cite{milton}. 

As first shown in \cite{bandgap}, \emph{subwavelength band gaps} can be found in any type of these periodic systems. Introducing defects in a subwavelength band gap material can be used to trap or guide waves at subwavelength scales. 
Throughout this paper, subwavelength resonant frequencies of a resonator structure with a defect, which lie inside a subwavelength band gap of the unperturbed structure, will be called subwavelength \emph{defect mode eigenfrequencies}; see  Definition \ref{def:defect_mode_eigenfrequency}. 

In \cite{point_defect}, the existence of subwavelength defect mode eigenfrequencies in point defect structures is demonstrated. In \cite{line_defect}, by showing that the defect modes in the case of a line defect in a crystal of subwavelength resonators are not bound along the defect line, it is proved that  
the line defect indeed acts as a waveguide; waves of certain frequencies inside the band gap  are localized to, and guided along, the line defect. However, there is a fundamental restriction of the practical applicability of these localized/guided modes since the band-gap frequencies are  exponentially close to the edge of the bulk bands.
Therefore, for structures to have physically achievable localization properties, their guiding properties must be robust against imperfections of their design. 

By taking  inspiration from the field of topological insulators, it is shown in \cite{ammari2020topological,ammari2020robust} that systems of subwavelength resonators can be designed to have spatial wave localization properties, at subwavelength scales, that are robust with respect to random imperfections. Moreover, as recently shown in \cite{anderson}, the scattering of time-harmonic waves by systems of high-contrast resonators with randomly chosen material parameters reproduces the characteristic features of Anderson localization and its properties can be understood by considering the phenomena of {hybridization} and {level repulsion}.

On the other hand, it has been proposed in recent years to add non-hermiticity into systems of subwavelength resonators in order to achieve intriguing phenomena, such as nonreciprocal transmission properties, unidirectional amplification
\cite{alu_rev2,mathias_1,mathias_2,shalaev,fleury,jc1,jc2,jc3,sima,NHtopology,fort}. This has been done by either adding gain and loss in the material parameters \cite{sima,mathias_2,NHtopology} or by varying them in time. Time variations of the material parameters could be either instantaneous as considered in this paper (see also \cite{koutserimpas2018electromagnetic, fort,mathias_1}) or periodically modulated \cite{ammari2020time,refedge1,alu_rev2,floq2,thea2022jde}. 

The focus of this article is to furnish explicit and easy-to-implement formulas which allow the design and construction of defected materials with prescribed defect mode eigenfrequencies. This is achieved for up to two defect mode eigenfrequencies in a defected chain of dimers. The method for a more general setting (arbitrarily finitely many defect mode eigenfrequencies in $n$-resonator periodic systems) is also explained.
The design of (static) defected non-hermitian materials that admit a certain defect mode eigenfrequency provides the basis for the design of time-dependent materials that admit time-dependent defect mode eigenfrequencies $\omega(t) = \omega^{-}\chi_{t<0}+\omega^{+}\chi_{t\geq0}$, thus providing an approach to design time-dependent non-hermitian materials.
An application of the design of time-dependent non-hermitian materials is the creation of simultaneous spatial and temporal localization of scalar waves at given (time-dependent) defect mode eigenfrequencies $\omega(t)$. In fact, this is achieved by choosing $\omega(t) = \omega^{-}\chi_{t<0}+\omega^{+}\chi_{t\geq0}$ with $\Im(\omega^{-}) > 0$ and $\Im(\omega^{+}) < 0$.

For the sake of clarity and presentation, we will consider a chain of periodically repeated resonator dimers and introduce one to two defected resonators to the periodic structure. However, we would like to emphasize that the choice of a chain with repeated dimers has \emph{only} illustrative reasons and that the same approach presented here can be used to consider a system of $n$ resonators which are repeated with some arbitrary lattice $\Lambda \subset \mathbb{R}^3$; see the two remarks at the end of Subsection \ref{sec:Relation_between_defect_properties_and_localized_frequency}. 

Thus, for the sake of clarity, throughout this paper, we  let $D_1$, $D_2 \subset [0,1) \times \mathbb{R}^2$ be two disjoint smooth simply connected domains and consider $D_i^m := D_i + (m,0,0)^T$ with $i= 1,2$ and $\mathcal{D} := \bigcup_{m \in \mathbb{Z}} D_1^m \cup D_2^m$.
We will study subwavelength waves in the setting where the material parameters are of the form
    \begin{align}
        \rho(x,t) = \begin{cases}
            \rho_b^- & t<0,~ x \in \mathbb{R}^3\setminus\mathcal{D} \\
            \rho_{i,m}^{-} & t<0,~ x \in D_i^m \\
            \rho_b^+ & t\geq 0,~ x \in \mathbb{R}^3\setminus\mathcal{D} \\
            \rho_{i,m}^{+} & t\geq 0,~ x \in D_i^m \\
        \end{cases} \quad \text{ and } \quad
        \kappa(x,t) = \begin{cases}
            \kappa_b^- & t<0,~ x \in \mathbb{R}^3\setminus\mathcal{D} \\
            \kappa_{i,m}^{-} & t<0,~ x \in D_i^m \\
            \kappa_b^+ & t\geq 0,~ x \in \mathbb{R}^3\setminus\mathcal{D} \\
            \kappa_{i,m}^{+} & t\geq 0,~ x \in D_i^m \\
        \end{cases},
    \end{align}
with $\rho_b^-,~ \rho_{i,m}^{-},~ \rho_b^+,~ \rho_{i,m}^{+},~ \kappa_b^-,~ \kappa_{i,m}^{-},~ \kappa_b^+,~ \kappa_{i,m}^{+} \in \mathbb{C}$.
The associated wave equation with time-dependent coefficients is given by
\begin{equation} \label{eqwtd}
    \left(\frac{\partial}{\partial t}\frac{1}{\kappa(x,t)}\frac{\partial}{\partial t} - \diver_x \frac{1}{\rho(x,t)}\nabla_x\right)u(x,t) = 0, \quad x \in \mathbb{R}^3, \quad t \in \mathbb{R}.
\end{equation}
In this setting, we aim to find solutions which are time-harmonic for negative and positive times. That is, we want to find solutions $u: \mathbb{R}^3 \times \mathbb{R} \rightarrow \mathbb{C}$ of the following form:
\begin{align}\label{eq:sol_intro}
    u(x,t) = \begin{cases}
                v(x)e^{-i\omega_-t} & t<0,\\
                v(x)e^{-i\omega_+t} & t\geq0, 
            \end{cases}
\end{align}
where $v: \mathbb{R}^3 \rightarrow \mathbb{C}$ and $\omega_\pm \in \mathbb{C}$. Such solutions will be called \emph{quasi-harmonic}.
We will say that a solution $u$ is then \emph{spatio-temporally localized}, if $v(\cdot,x_2,x_3) \in L^2(\mathbb{R})$ for almost every $(x_2,x_3) \in \mathbb{R}$ (\emph{spatial localization}) and if for almost every choice of $x \in \mathbb{R}^3$ the function $u(x,\cdot)$ is in $L^2(\mathbb{R})$ (\emph{temporal localization}). This is the case precisely when $\Im(\omega_-) > 0$ and $\Im(\omega_+) < 0$.
In this setting it is possible to split the problem into two parts: a static part and a time-transition part. We will first study $v: \mathbb{R}^3 \rightarrow \mathbb{C}$ more closely and determine a setting in which $v$ is spatially localized. In fact, as shown in Section \ref{sec:Spatio-temporal_localization}, $v$ is the solution to the Helmholtz equation
    \begin{align*}
        \Delta v(x) + \omega_{\pm}\sqrt{\frac{\kappa^{\pm}(x)}{\rho^{\pm}(x)}}v(x) = 0,
    \end{align*}
    together with an appropriate boundary conditions for $v(x)$ on $\partial \mathcal{D}$, 
where the constant (w.r.t. time $t$) coefficients are given by $\kappa^{-}(x) = \kappa(x,t)|_{t<0},~ \rho^{-}(x) = \rho(x,t)|_{t<0}$ and $\kappa^{+}(x) = \kappa(x,t)|_{t\geq0},~ \rho^{+}(x) = \rho(x,t)|_{t\geq0}$,  respectively.

The methodology adopted in this paper is based on a discrete approximation of the spectral problem obtained in the high contrast limit \cite{ammari2021functional}. In a periodic material, e.g. assuming that the material parameters $\rho$ and $\kappa$ are periodic with respect to the lattice $\Lambda = (1,0,0)^T\mathbb{Z}$, that is, $\rho_{i,m} = \rho_{i,n}$ and $\kappa_{i,m} = \kappa_{i,n}$ for all $m,n \in \mathbb{Z}$, and $\rho_{i,m}/\rho_b \ll 1$, it has been shown in 
\cite{ammari2021functional,ammari2020topological} that the subwavelength spectrum can be approximated in terms of the eigenvalues of the so-called \emph{generalized capacitance matrix} $\mathcal{C}^\alpha$, where $\alpha \in \mathbb{R}/2\pi\mathbb{Z}$. 
In a periodic structure the associated (Bloch) modes $v \in L^2_{loc}$ are $\alpha$-quasiperiodic, which in this case means that $(x_1,x_2,x_3) \mapsto v(x_1,x_2,x_3)\exp(-i\alpha \cdot x_1)$ is $\Lambda$-periodic and thus $v(x)$ is not square integrable along the $x_1$-direction. In order to achieve spatial localization, it is thus necessary to break the periodicity of the system and to introduce defects in the periodic structure.
One possibility to break the periodicity is to change some of the $\rho_{m,i}$'s and $\kappa_{m,i}$'s. It turns out that the defect mode eigenfrequencies associated to a defected structure with finitely many defects are related to the generalized capacitance matrix $\mathcal{C}^\alpha$ of the periodic non-defected structure and the defect material parameters and locations \cite{anderson}. 

Based on the characterization of the defect mode eigenfrequencies in the hermitian case (i.e., when all the material parameters are real positive) first provided in \cite{anderson},  we derive several characterizing equations for the defect mode eigenfrequencies of localized modes in the setting of single and double defects in non-hermitian chains of resonator dimers; see Subsection \ref{sec:Chains_of_dimers_with_defect}. Using these characterizing equations we succeed in showing numerically the existence of localized modes in defected non-hermitian chains of dimers. 

Changing perspective, we ask no longer whether a certain choice of resonator system with given material parameters admits localized solutions, but we ask whether it is possible to introduce a defect (by changing the material parameters of a single resonator) in a given resonator structure, such that a given frequency $\omega$ is a defect mode eigenfrequency of the defected system. 
The characterization from Subsection \ref{sec:Chains_of_dimers_with_defect} provides a basis for answering this question. In Subsection \ref{sec:Relation_between_defect_properties_and_localized_frequency}, we prove that the answer to this question is indeed `yes' for frequencies $\omega$ which lie in the band gap of the Helmholtz problem and which satisfy a mild technical condition. We are able to derive an explicit function $\omega \mapsto V^{def}(\omega)$ which maps the frequency to the defect material parameters; see Theorem \ref{thm:formula_defect_freq}. Even more is true. Theorem \ref{thm:formula_double_defect_freq} provides a function $(\omega_1,\omega_2) \mapsto (V_1^{def}(\omega_1,\omega_2),V_2^{def}(\omega_1,\omega_2))$ which maps a pair of frequencies $(\omega_1,\omega_2)$ to a pair of defect material parameters $(V_1^{def}(\omega_1,\omega_2),V_2^{def}(\omega_1,\omega_2))$, such that the material with two defected resonators with material parameters given by $V_1^{def}(\omega_1,\omega_2)$ and $V_2^{def}(\omega_1,\omega_2)$, respectively, admits two defect mode eigenfrequencies given by $\omega_1$ and $\omega_2$.
We observe that the procedure to obtain functions which map frequencies to associated defected material parameters reduces to solving polynomial equations in the variables given by the defect material parameters. Indeed for $n$ given frequencies, it is possible to find $n$ defect material parameters such that, generically, the given frequencies occur as defect mode eigenfrequencies. In that case, the material parameters $V^{def}_1,\ldots,V^{def}_n$ satisfy a system of $n$ polynomial equations of degree $n$ and as such, generically, have a solution. This makes clear that, generically, it is possible to deduce similar explicit formulas as in Theorems \ref{thm:formula_defect_freq} and \ref{thm:formula_double_defect_freq}, for up to four given frequencies. For at least five given frequencies, generically, the system of equations can no longer be solved explicitly, but needs to be solved numerically. For further details see the remarks at the end of Subsection \ref{sec:Relation_between_defect_properties_and_localized_frequency}.

Having established spatial localization, in a second step, we analyse the time-dependent setting in Section \ref{sec:Spatio-temporal_localization}. We show that in order to admit quasi-harmonic solutions as in \eqref{eq:sol_intro} it is sufficient and necessary that $\rho$ and $\kappa$ satisfies a \emph{temporal Snell's law}, that is, $\rho^-\omega^+ = \rho^+\omega^-$ and $\kappa^+\omega^+ = \kappa^-\omega^-$. This gives us a one-to-one correspondence between materials $\mathcal{M}_1$ and $\mathcal{M}_2$ such that the composite time-dependent material $\mathcal{M}$ given by $\mathcal{M}_1$ for times $t < 0$ and by $\mathcal{M}_2$ for times $t \geq 0$, admits solutions of the form \eqref{eq:sol_intro}. The temporal Snell's law describes time-refraction (see, for example, \cite{mathias_1}) tailored so that any time-harmonic solution of $\mathcal{M}_1$ will be refracted to a time-harmonic solution of $\mathcal{M}_2$ across the temporal boundary.

With these results at hand, we succeed to prove Theorem \ref{thm:time_dep_material_design}, which is in the same style as Theorems \ref{thm:formula_defect_freq} and \ref{thm:formula_double_defect_freq}.  Theorem \ref{thm:time_dep_material_design} establishes an explicit and easy-to-implement function which maps a time-dependent frequency $\omega(t) = \omega^{-}\chi_{t<0}+\omega^{+}\chi_{t\geq0}$ as in \eqref{eq:sol_intro} to the material parameters of a defected instantly changing material, such that $\omega(t)$ occurs as a \emph{time-dependent} defect mode eigenfrequency. 
In particular, it allows to construct examples of spatio-temporal localization, by choosing $\Im(\omega^{-})>0$ and $\Im(\omega^{+})<0$, see Theorem \ref{thm:spatio-temporal_loc} in Section \ref{subsec:Spatio-temporal_localization}.

The paper is organized as follows. It is  divided into a first part which treats the case of static metamaterials in Section \ref{sec:spatial_localization} and a second part which builds on the first and treats the case of time-dependent instantly changing metamaterials in Section \ref{sec:Spatio-temporal_localization}.  In the first part, exact formulas characterizing the defect mode eigenfrequencies are derived in Subsections \ref{subsec:PT_symmetric_mat} and \ref{subsec:material_double_and_single_defect} for a  system of resonator dimers with a single or a double defect. Then explicit formulas for the design of defected structures which admit specified defect mode eigenfrequencies are obtained (Subsection \ref{sec:Relation_between_defect_properties_and_localized_frequency}). The results of Subsection \ref{sec:Relation_between_defect_properties_and_localized_frequency} are generalized in Section \ref{sec:Spatio-temporal_localization} to the setting of instantly changing metamaterials with prescribed time-dependent defect mode eigenfrequencies $\omega(t) = \omega^{-}\chi_{t<0}+\omega^{+}\chi_{t\geq0}$. To this end, in Subsection \ref{subsec:Instantly_changing_material}, the occurrence of \emph{quasi-harmonic waves} in instantly changing metamaterials is characterized in Theorem \ref{thm:time_dependent}. Then, Subsection \ref{subsec:temporal_material} introduces the design of time-dependent materials which admit localized modes with a prescribed time-dependent defect mode eigenfrequency $\omega(t) = \omega^{-}\chi_{t<0}+\omega^{+}\chi_{t\geq0}$. The main result is presented in Theorem \ref{thm:time_dep_material_design}, which furnishes the basis for the design of instantly changing metamaterials which admit spatio-temporally localized modes at given frequencies. This is addressed in Section \ref{subsec:Spatio-temporal_localization} and the central design and existence result is presented in Theorem \ref{thm:spatio-temporal_loc}.

\section{Spatial localization}\label{sec:spatial_localization}
    As outlined in the introduction, this section is the first step towards designing instantly changing materials with prescribed time-dependent defect mode eigenfrequency, which is the basis for spatio-temporal localization. After having introduced the setting and necessary background in Subsection \ref{sec:Theoretical_background_and_setting}, we derive in Subsection \ref{sec:Chains_of_dimers_with_defect} characterizing equations for defect mode eigenfrequencies in non-hermitian infinite chains of dimers with one or two defected resonators. These characterizing equations are numerically solved to determine the defect mode eigenfrequencies of  defected structures.
    We also observe that approximating an infinite chain resonator dimers with a finite, truncated chain gives accurate approximations  for the defect mode eigenfrequencies. Then, in Subsection \ref{sec:Relation_between_defect_properties_and_localized_frequency}, we are able to use these characterizing equations for the design of defected dimer systems which admit prescribed defect mode eigenfrequencies. That is, given $\omega$, we derive explicit functions which map frequencies $\omega$ to the defect material parameters of a dimer system, such that  $\omega$ occurs as a defect mode eigenfrequency in the associated defected material.

    \subsection{Theoretical background and setting}\label{sec:Theoretical_background_and_setting}

    \begin{figure}[h]
        \centering
        \begin{subfigure}[b]{0.49\textwidth}
            \centering
            \includegraphics[width=\textwidth]
            {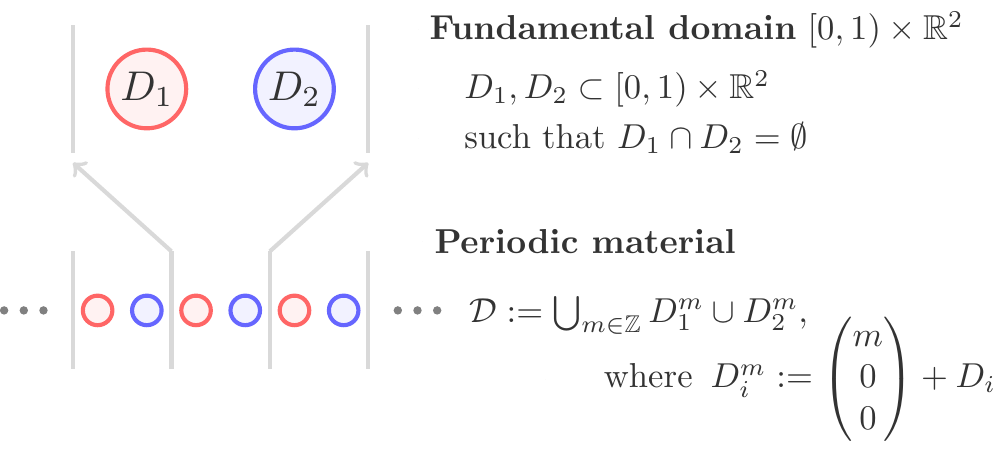}
            \caption*{\vspace{2pt}}
        \end{subfigure}
        \hfill
        \begin{subfigure}[b]{0.49\textwidth}
            \centering
            \includegraphics[width=\textwidth]{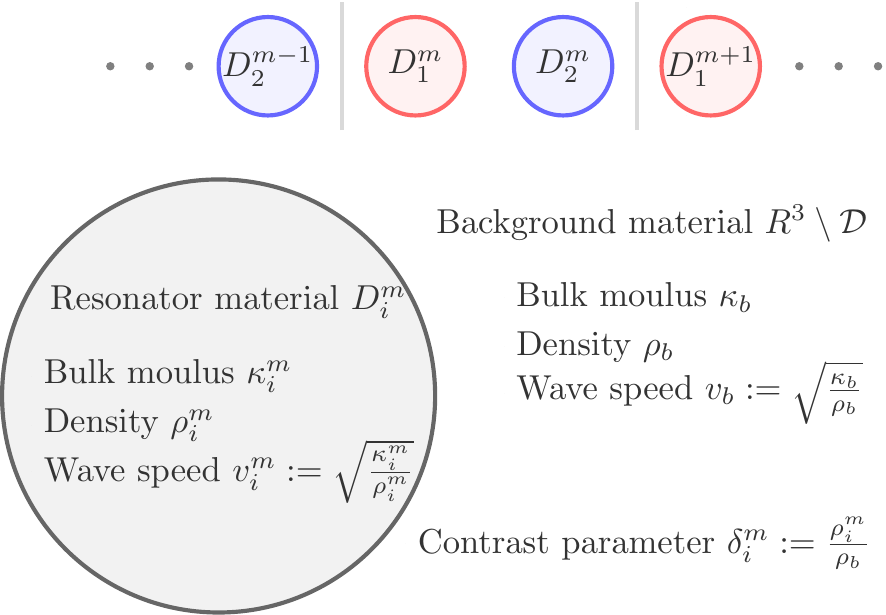}
        \end{subfigure}
        \caption{Notation convention for material parameters used in this paper.}
        \label{fig:def_structure_periodic_material_parameters}
        \label{fig:def_structure_periodic}
    \end{figure}

    In this work, we will consider disjoint resonators $D_1, D_2 \subset [0,1) \times \mathbb{R}^2$ repeated periodically with respect to $\Lambda = (1,0,0)^T\mathbb{Z}$ in $\mathbb{R}^3$, forming a chain of resonator dimers as depicted on Figure \ref{fig:def_structure_periodic} (left). To be more precise, let 
            \begin{align*}
                D_1, D_2 \subset [0,1) \times \mathbb{R}^{2} \quad \text{ such that }\quad D_1 \cap D_2 = \emptyset\\
            \end{align*}    
        and define
            $$\mathcal{D} := \bigcup_{m \in \mathbb{Z}} D^m_1 \cup D^m_2, \quad \text{ where }\quad D_i^m := \begin{pmatrix}
                m \\ 0 \\ 0
            \end{pmatrix} + D_i.$$
    Furthermore, we associate to every resonator $D_i^m$ with $i=1,2$ and $m \in \mathbb{Z}$ the material parameters $\rho_i^m$ and $\kappa_i^m$; $\rho_b$ and $\kappa_b$ will denote the material parameters of the background medium (as depicted in Figure \ref{fig:def_structure_periodic_material_parameters} (right)). The respective \emph{wave speeds} $v_i^m$ in each resonator $D_i^m$ are then given by $v_i^m = \sqrt{\kappa_i^m/\rho_i^m}$ and we will denote by $v_b = \sqrt{\kappa_b/\rho_b}$ the wave speed in the surrounding background medium $\mathbb{R}^3 \setminus \mathcal{D}$. The contrast parameter associated to the resonator $D_i^m$ will be denoted by $\delta_i^m:=\rho_i^m/\rho_b$ and which will satisfy that for some more general $\delta$
            \begin{equation}
                \label{deltall1}
             \delta_i^m = O(\delta) \text{ as } \delta  \rightarrow 0.\end{equation}

    In the static case, we are interested in solving the wave equation
    \begin{align*}
        \left(\frac{1}{\kappa(x)} \frac{\partial^2}{\partial t^2} - \diver_x \frac{1}{\rho(x)}\nabla_x\right)u(x,t) = 0, \quad x \in \mathbb{R}^3, \quad t \in \mathbb{R},
    \end{align*}
    for a time harmonic solution $u(x,t) = v(x)\exp(-i\omega t)$ in the \emph{subwavelength regime}. That is, we are interested in parameterized solutions $u_\delta(x,t) = v_\delta(x)\exp(-i\omega_\delta t)$, such that $v_\delta$ and $\omega_\delta$ depend continuously on $\delta$ and such that $v_\delta \not= 0$ for all $\delta$ close to 0 and
    $$ \omega_\delta \rightarrow 0 \text{ as } \delta \rightarrow 0. $$
    We call solutions $(\omega_\delta, v_\delta)$ of this form \emph{subwavelength solutions}. In that case, $\omega_\delta$ is called \emph{subwavelength frequency} and $v_\delta$ is called \emph{subwavelength mode}.
     Solving the wave equation \eqref{eqwtd} is actually equivalent to solving the following system of Helmholtz equations in $v:\mathbb{R}^3 \rightarrow \mathbb{C}$ and $\omega \in \mathbb{C}$ with $\Re(\omega) > 0$:
        \begin{align}\label{eq:Helmholtz_prob}
            \begin{cases}
                \Delta v + \frac{\omega^2}{v^2} v = 0 & \text{ in } \mathbb{R}^3 \setminus \mathcal{D},\\
                \Delta v + \frac{\omega^2}{(v_i^m)^2} v = 0 & \text{ in } D_i^m,~ m\in \mathbb{Z},~ i =1,2,\\
                v\rvert_+ - v\rvert_- = 0 &\text{ on } \partial \mathcal{D},\\
                \delta_i^m\frac{\partial v}{\partial \nu}\rvert_+ - \frac{\partial v}{\partial \nu}\rvert_- = 0 &\text{ on } \partial \mathcal{D},\\
                v(x_1,x_2, x_3) & \text{ satisfies the outgoing radiation condition as $\lvert x_1 \rvert \rightarrow \infty$.}
            \end{cases}
        \end{align}

    Assuming that the material parameters $\rho_i^m$ and $\kappa_i^m$ are periodic, that is, assuming $\rho_i^m = \rho_i^n$ and $\kappa_i^m = \kappa_i^n$ for $i=1,2$ and for all $m,n \in \mathbb{Z}$, one can apply Floquet-Bloch theory and find quasi-periodic solutions to (\ref{eq:Helmholtz_prob}). 
    In this case, the case of periodic material parameters, the \emph{lattice} of the structure is given by $\Lambda = \mathbb{Z}(1,0,0)^T$, the \emph{dual lattice} is given by $\Lambda^* = \mathbb{Z}(2\pi,0,0)^T$ and the \emph{Brillouin Zone} by $Y^* = \mathbb{R}/2\pi\mathbb{Z} \times \{0\}^{2}$.
    For $\alpha \in Y^*$, define the quasiperiodic Green's function $G^{\alpha}(x)$ as the Floquet transform of the Green's function $G(x)$ associated to the Laplace equation 
        $$ \Delta u = 0, $$
    which is given in three dimensions by 
        $$ G(x) = -\frac{1}{4\pi\abs{x}}.$$
    Hence, $G^{\alpha}$ is defined as
        $$ G^{\alpha}(x) := \sum_{m \in \Lambda} G(x)e^{i \alpha \cdot m}.$$
    The associated single layer potential to $G^{\alpha}$ is then
        $$ \mathcal{S}^\alpha_D[\phi](x) := \int_{\partial D} G^\alpha(x - y)\phi(y)d\sigma(y), \quad x \in \mathbb{R}^3,$$
        and is known to be invertible from $L^2(\partial D)$ onto $H^1(\partial D)$ with $H^1(\partial D)$ being the usual Sobolev space; see, for instance, \cite{ammari2018mathematical}. 

    \begin{definition}[Quasiperiodic generalized capacitance matrix]
        Let $\alpha \in Y^* - \{0\}$. Let $D_1,\ldots,D_N \subset \mathbb{R}^3$ be $N$ resonators which are repeated periodically with respect to some lattice $\Lambda \subset \mathbb{R}^3$. Define the \emph{quasiperiodic capacitance matrix} $C^\alpha = (C_{ij}^\alpha) \in \mathbb{C}^{N\times N}$ as
            $$ C_{ij}^\alpha = - \int_{\partial D_i}(\mathcal{S}^\alpha_D)^{-1}[\chi_{\partial D_j}]d\sigma, \quad i,j = 1,\ldots,N,$$
where $\chi_{\partial D_j}$ is the indicator function of $\partial D_i$. Then the \emph{generalized quasiperiodic capacitance matrix} $\mathcal{C}^\alpha = (\mathcal{C}_{ij}^\alpha) \in \mathbb{C}^{N\times N}$ is given by
            \begin{equation}
            \label{defc} \mathcal{C}_{ij}^\alpha = \frac{\delta_i v_i^2}{\abs{D_i}}C_{ij}^\alpha, \quad i,j = 1,\ldots,N, \end{equation}
            where $\abs{D_i}$ denotes the volume of $D_i$. 
    \end{definition}

    The subwavelength frequencies of the system of $N$ resonators are given by the eigenvalues of the generalized quasiperiodic capacitance matrix up to an error of order $\mathcal{O}(\delta)$; see \cite{ammari2021functional}.
    Thus, {\it a priori}, the quasiperiodic capacitance matrix only describes subwavelength phenomena in the infinite periodic setting. However, it has been proven in \cite{anderson} that the quasiperiodic capacitance matrix also helps to understand localized modes in defected structures with finitely many defected resonators.
    
    More precisely, assume that for finitely many $D_i^m$ the material parameters are not given by $\delta_i v_i^2$ but by $\delta_i v_i^2 (1 + x_i^m)$ for some $x_i^m \in \mathbb{C}$; an example being depicted in Figure \ref{fig:def_structure_Introduction_of_def_details}.
    For $m \in \mathbb{Z}$, denote by $X_m$ the diagonal matrix with the diagonal entries given by 
    $$ (X_m)_{ii} = x_i^m. $$

    \begin{definition}[Localized mode and defect mode eigenfrequency]\label{def:defect_mode_eigenfrequency}
        Using the notation from the beginning of this chapter and being in the setting of a defected material, where the material parameters $\delta_i v_i^2$ of finitely many resonators $D^m_i$ have been replaced by $\delta_i v_i^2 (1 + x_i^m)$ for $x^m_i \in \mathbb{C}$, we consider the Helmholtz problem \eqref{eq:Helmholtz_prob} and assume that $(\omega_\delta, v_\delta)$ is a subwavelength solution.
        We  call $v_\delta$ a \emph{localized} or \emph{defect mode} if $(\omega_\delta,v_\delta)$ satisfies
            \begin{align*}
                \int_{\mathbb{R}} \abs{v_\delta(x_1,x_2,x_3)}^2dx_1 < \infty ~ &\text{ for a.e. } (x_2,x_3) \in \mathbb{R}^2,\\
                \omega_\delta \not \in \bigcup_{\alpha\in Y^*\setminus\{0\}} &\sigma(\mathcal{C}^\alpha),
            \end{align*}
        where $\sigma(\mathcal{C}^\alpha)$ denotes the set of eigenvalues of $\mathcal{C}^\alpha$.
        In that case, $\omega_\delta$ is called \emph{defect mode eigenfrequency}.
    \end{definition}
    The first condition formalizes the localized nature of the subwavelength solution, whereas the second condition formalizes the defect nature. We have the following result from \cite[Proposition 3.7]{anderson}. 
    \begin{figure}
        \centering
        \includegraphics{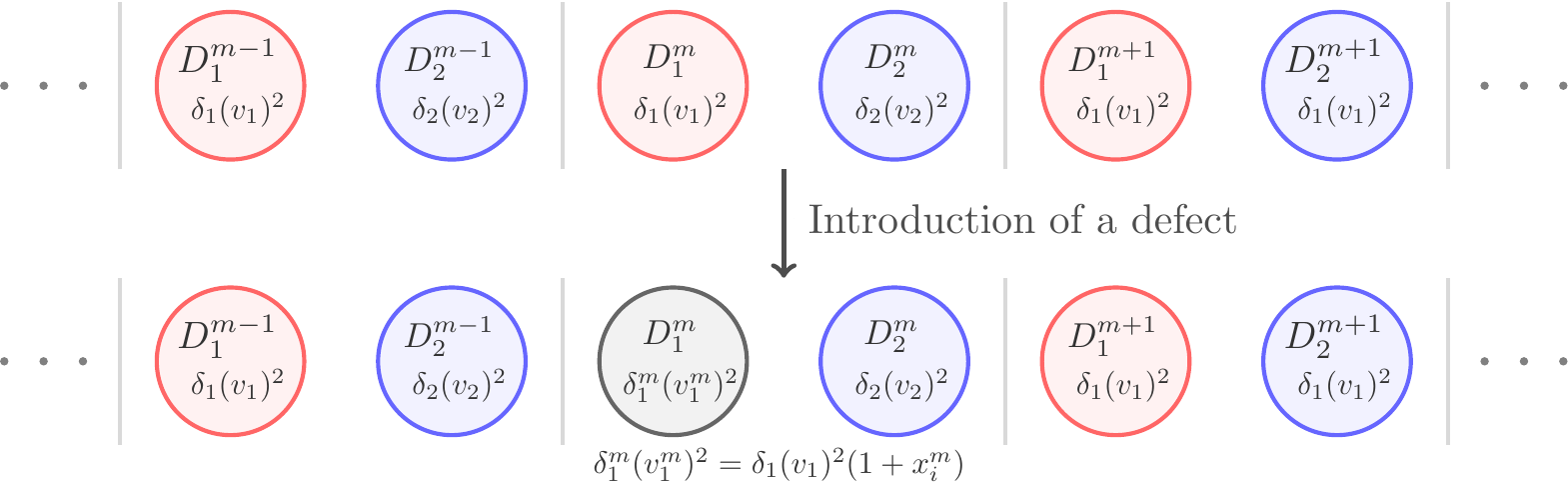}
        \caption{Illustrative example of the introduction of finitely many defected resonators with the notation convention used in Section \ref{sec:Theoretical_background_and_setting}.}
        \label{fig:def_structure_Introduction_of_def_details}
    \end{figure}
    \begin{theorem}[Defect mode eigenfrequencies] \label{thm:defect}
        Assume that $x_i^m \neq 0$ for finitely many $m \in \Lambda$ and $1\leq i \leq N$. Let $\omega \in \mathbb{C}$ and assume $\omega \not\in \bigcup_{\alpha \in Y^*}\sigma(\mathcal{C}^\alpha)$ for all $\alpha \in Y^* - \{0\}$. Then $\omega$ is a defect mode eigenfrequency associated to the above defected material if and only if
        \begin{align}\label{eq:general_eq_for_defect}
            \det(\Id - \mathcal{X}\mathcal{T}(\omega)) = 0,
        \end{align}
        where $\Id$ is the identity matrix and $\mathcal{X}$ is the block-diagonal matrix with entries $X_m$ and $\mathcal{T}$ is the infinite block \emph{Toeplitz matrix} given by
            \begin{align}\label{eq:toeplitz_mat}
                \mathcal{T}(\omega)_{m,n} = T^{n-m}(\omega) = 
                - \frac{1}{\abs{Y^*}}\int_{Y^*} e^{i\alpha (n-m)}\mathcal{C}^\alpha(\mathcal{C}^\alpha - \omega^2 \Id)^{-1}d\alpha.
            \end{align}
    \end{theorem}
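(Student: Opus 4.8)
The plan is to turn the continuous Helmholtz problem into a discrete spectral problem for the generalized capacitance operator on the full lattice, and then to solve the resulting lattice equation by a Floquet--Bloch resolvent computation. First I would invoke the capacitance reduction of \cite{ammari2021functional,anderson}: it identifies a subwavelength defect mode of \eqref{eq:Helmholtz_prob} with an $\ell^2(\Lambda;\mathbb{C}^N)$ eigenvector $\mathbf{u}$, at eigenvalue $\omega^2$, of the full-lattice operator
\[
\widetilde{\mathcal{C}} \;=\; (\Id+\mathcal{X})\,\mathcal{C}_{\mathrm{per}},
\]
where $\mathcal{C}_{\mathrm{per}}$ is the translation-invariant generalized capacitance operator whose Bloch symbol is $\mathcal{C}^\alpha$, and the block-diagonal $\mathcal{X}$ (with blocks $X_m=\diag(x_i^m)$, supported on the finitely many defected cells) encodes the multiplicative perturbation $\delta_i v_i^2\mapsto\delta_i v_i^2(1+x_i^m)$. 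In this dictionary the spatial-localization condition $\int_{\mathbb{R}}|v_\delta|^2\,dx_1<\infty$ corresponds exactly to $\mathbf{u}\in\ell^2$, while the defect condition $\omega\notin\bigcup_{\alpha}\sigma(\mathcal{C}^\alpha)$ says that $\omega^2$ avoids the Bloch (essential) spectrum of $\mathcal{C}_{\mathrm{per}}$.

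Next I would identify $\mathcal{T}(\omega)$ analytically. Because $\mathcal{C}_{\mathrm{per}}$ commutes with lattice translations, so does any function of it; in particular $\mathcal{C}_{\mathrm{per}}(\mathcal{C}_{\mathrm{per}}-\omega^2\Id)^{-1}$ is translation invariant, hence block-Toeplitz with $(m,n)$-block depending only on $n-m$. Under the Floquet--Bloch transform, a unitary equivalence carrying $\mathcal{C}_{\mathrm{per}}$ to the fibered operator $\int_{Y^*}^{\oplus}\mathcal{C}^\alpha\,d\alpha$, the symbol of this operator is $\mathcal{C}^\alpha(\mathcal{C}^\alpha-\omega^2\Id)^{-1}$, so the inverse transform \eqref{eq:toeplitz_mat} gives precisely
\[
\mathcal{T}(\omega)=-\,\mathcal{C}_{\mathrm{per}}(\mathcal{C}_{\mathrm{per}}-\omega^2\Id)^{-1}.
\]
Here the hypothesis $\omega^2\notin\bigcup_\alpha\sigma(\mathcal{C}^\alpha)$ is used twice: it makes the integrand continuous (indeed analytic) and bounded in $\alpha$ over the compact $Y^*$, so that $\mathcal{T}(\omega)$ is a well-defined bounded operator even though the $\mathcal{C}^\alpha$ are non-hermitian, and it makes $(\mathcal{C}_{\mathrm{per}}-\omega^2\Id)^{-1}$ bounded on $\ell^2$.

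The core is then a short algebraic manipulation. I would rewrite $(\Id+\mathcal{X})\mathcal{C}_{\mathrm{per}}\mathbf{u}=\omega^2\mathbf{u}$ as $(\mathcal{C}_{\mathrm{per}}-\omega^2\Id)\mathbf{u}=-\mathcal{X}\mathcal{C}_{\mathrm{per}}\mathbf{u}$, introduce the auxiliary vector $\mathbf{y}:=\mathcal{X}\mathcal{C}_{\mathrm{per}}\mathbf{u}$ (supported on the defected cells), invert to get $\mathbf{u}=-(\mathcal{C}_{\mathrm{per}}-\omega^2\Id)^{-1}\mathbf{y}$, and apply $\mathcal{X}\mathcal{C}_{\mathrm{per}}$ to close the system as $(\Id-\mathcal{X}\mathcal{T}(\omega))\mathbf{y}=0$. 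The equivalence is then secured in both directions: from a nonzero $\ell^2$ eigenvector $\mathbf{u}$, injectivity of $\mathcal{C}_{\mathrm{per}}-\omega^2\Id$ forces $\mathbf{y}\neq0$; conversely, given a nonzero solution $\mathbf{y}$ of the reduced equation, $\mathbf{u}:=-(\mathcal{C}_{\mathrm{per}}-\omega^2\Id)^{-1}\mathbf{y}$ lies automatically in $\ell^2$, is nonzero, and is checked to solve the eigenequation. Finally, since $\mathcal{X}$ is supported on the finitely many defected cells, $\mathcal{X}\mathcal{T}(\omega)$ has finite rank, so $\mathbf{y}$ ranges over a finite-dimensional space and nontrivial solvability is equivalent to the vanishing of the (finite, equivalently Fredholm) determinant $\det(\Id-\mathcal{X}\mathcal{T}(\omega))=0$, which is \eqref{eq:general_eq_for_defect}.

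I expect the main obstacle to lie not in the algebra but in the first two steps: justifying rigorously the passage from the continuous Helmholtz problem to the discrete capacitance eigenproblem — in particular that spatial $L^2$-localization of $v_\delta$ is equivalent to $\mathbf{u}\in\ell^2$ — and the Floquet--Bloch diagonalization together with the resolvent bounds in the non-hermitian regime, where $\sigma(\mathcal{C}^\alpha)$ is a genuinely complex spectrum and self-adjointness is unavailable. Both rest on the capacitance approximation and the spectral analysis of \cite{ammari2021functional,anderson}, which I would cite rather than reprove.
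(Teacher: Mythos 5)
The paper does not actually prove Theorem \ref{thm:defect} anywhere --- it is quoted directly from \cite[Proposition 3.7]{anderson} --- so there is no internal proof to compare against; your reconstruction is correct and is essentially the argument underlying that cited result. Specifically, your reduction of the defected lattice eigenproblem $(\Id+\mathcal{X})\mathcal{C}_{\mathrm{per}}\mathbf{u}=\omega^{2}\mathbf{u}$ to the finite-rank condition $\det(\Id-\mathcal{X}\mathcal{T}(\omega))=0$ via the Floquet--Bloch resolvent identity $\mathcal{T}(\omega)=-\mathcal{C}_{\mathrm{per}}(\mathcal{C}_{\mathrm{per}}-\omega^{2}\Id)^{-1}$, with the passage from the continuous Helmholtz problem to the discrete capacitance problem delegated to \cite{ammari2021functional,anderson}, matches the standard proof of that proposition, and your two-directional check (injectivity of $\mathcal{C}_{\mathrm{per}}-\omega^{2}\Id$ forcing $\mathbf{y}\neq 0$, and the converse construction $\mathbf{u}=-(\mathcal{C}_{\mathrm{per}}-\omega^{2}\Id)^{-1}\mathbf{y}$) is sound.
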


    Since $\mathcal{X}$ has only finitely many non-zero values, it follows that the determinant in equation \eqref{eq:general_eq_for_defect} reduces to a determinant of a finite dimensional matrix and is thus well-defined.
    
    Furthermore, we would like to note that this theorem serves as the basis of the following paper. Although \cite{anderson} is mainly concerned with the \emph{hermitian} case, that is, real-valued material parameters, the theorem holds in the general \emph{non-hermitian} case of complex material parameters. This is the setting in which this theorem will be heavily used in this work. Most of the results obtained in this paper rely on its characterization of the defect mode eigenfrequencies.

    \subsection{Characterization of defect mode eigenfrequencies}\label{sec:Chains_of_dimers_with_defect}
        In this section, characterizing equations for defect mode eigenfrequencies in defected structures will be derived. To this end, we will study three different settings, the first being a $\mathcal{PT}$-symmetric chain of spherical resonator dimers 
        (i.e., identical spheres with material parameters complex conjugate to each other as shown in \eqref{complexcj}) and with a single defected resonator $D_1^0$ in which the wave speed $v_1^0$ has been replaced with its complex conjugate $\overline{v_1^0}$. 
        The characterizing equation for the defect mode eigenfrequencies in this setting is the result of Lemma \ref{lem:formula_defect_freq_PT}. Next, we consider the setting of a general chain of dimers with a single defected resonator $D_1^0$ with wave speed $\sqrt{V^{def}/\delta_1^0}$, the respective result can be found in Lemma \ref{lem:formula_defect_freq_single}. 
        Finally, we characterize the defect mode eigenfrequencies in a defected chain of dimers with two defected resonators $D_1^0$ and $D_2^0$; see Lemma \ref{lem:formula_defect_freq_double}. Finally, these results are used in Subsection \ref{sec:Relation_between_defect_properties_and_localized_frequency} in order to design defected materials with prescribed defect mode eigenfrequencies. 

        \subsubsection{PT-symmetric periodic system of resonator 
        dimers with a loss defect}\label{subsec:PT_symmetric_mat}
            In this section, we will study phenomena in the setting of a $\mathcal{PT}$-symmetric chain of equally sized, spherical, disjoint resonators and introduce a loss (respectively, gain) defect by conjugating the material parameter of the resonator $D_1^0$. This setting will significantly simplify the characterizing equation obtained for the defect mode eigenfrequencies.

            \begin{figure}[h]
                \centering
                \includegraphics{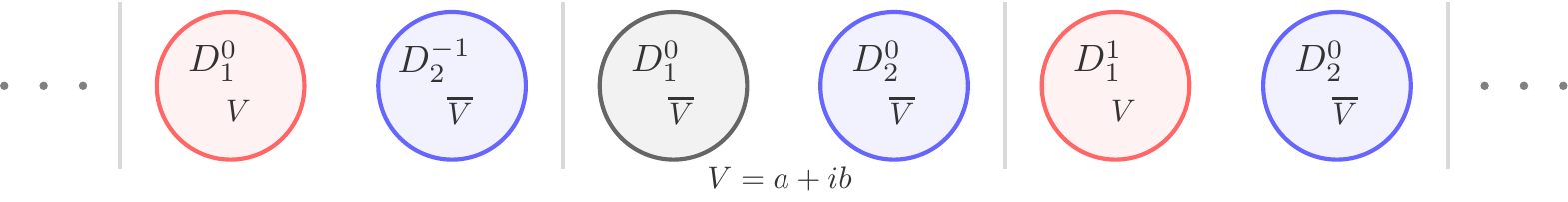}
                \caption{Setting and notation convention used in Lemma \ref{lem:formula_defect_freq_PT}.}
                \label{fig:def_PT_structure_with_defect_simplified}
            \end{figure}
            
            We suppose the material parameters of the periodic material to be given by
            \begin{equation} \label{complexcj}
                \delta_j (v_j)^2 = \begin{cases}
                    a + ib, &\text{if } j = 1,\\
                    a - ib, &\text{if } j = 2.
                \end{cases}
            \end{equation}

            As depicted in Figure \ref{fig:def_PT_structure_with_defect_simplified}, in order to study defected non-hermitian materials, we will replace the material parameter of a single resonator by its complex conjugate thus creating a loss/gain defect as depicted in Figures \ref{fig:def_structure_gain} and \ref{fig:def_structure_loss}.
            
            \begin{figure}[h]
                \centering
                \begin{subfigure}{\textwidth}
                    \centering
                    \includegraphics{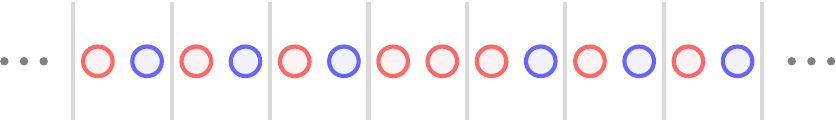}
                    \caption{Non-hermitian periodic material with \emph{gain} defect.}    
                    \label{fig:def_structure_gain}
                \end{subfigure}
                \hfill
                \begin{subfigure}{\textwidth}
                    \centering
                    \includegraphics{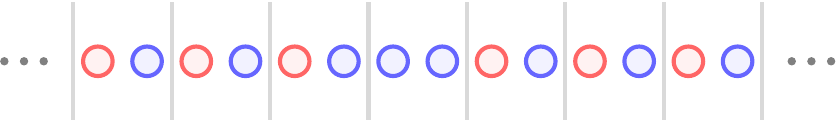}
                    \caption{Non-hermitian periodic material with \emph{loss} defect.}
                    \label{fig:def_structure_loss}
                \end{subfigure}
                \caption{Non-hermitian periodic structure without defect. Red indicates material parameters equal to $a + ib$ whereas blue indicates material parameters equal to $a-ib$. The proportions are precisely those used in the numerical simulations: length of the fundamental domain $L=1$, radius of the spherical resonators $0.15$, positions of the resonators at $(0.25 + \mathbb{Z},0,0)$ and $(0.75 + \mathbb{Z},0,0)$. }
            \end{figure}

            In this case, the generalized quasiperiodic capacitance 
            matrix defined in (\ref{defc})  takes the form 
                \begin{align*}
                    \mathcal{C}^\alpha = \begin{pmatrix}
                        a+ib & 0 \\
                        0 & a-ib
                    \end{pmatrix}C_\alpha = \begin{pmatrix}
                        a+ib & 0 \\
                        0 & a-ib
                    \end{pmatrix}\begin{pmatrix}
                        C_{11}(\alpha) & C_{12}(\alpha) \\
                        \overline{C_{12}(\alpha)} & C_{11}(\alpha)
                    \end{pmatrix}.
                \end{align*}
            In the case of a single loss defect in the zeroth fundamental domain, we obtain that there is precisely one non-zero block $X_0$ in the block diagonal matrix $(X_m)_{m \in \mathbb{Z}} = \mathcal{X}$, which is given by 
                \begin{align*}
                    X_0 = \begin{pmatrix}
                        \vspace{5pt}\frac{-2ib}{a+ib} &0\\
                    0 &0
                    \end{pmatrix}
                \end{align*}
            and the relevant part of the Toeplitz matrix $\mathcal{T}(\omega)$ in \eqref{eq:general_eq_for_defect} reduces to
                \begin{align*}
                    \mathcal{T}(\omega)_{0,0} 
                            ~=~ T^0 
                            ~=~ -\frac{1}{\lvert Y^*\rvert}\int_{Y^*}\mathcal{C}^\alpha(\mathcal{C}^\alpha - \omega^2 \Id)^{-1}d\alpha,
                \end{align*}
            where $\Id$ denotes here the $2\times 2$ identity matrix.

The following lemma holds. 
            \begin{lemma}\label{lem:formula_defect_freq_PT}
                The defect mode eigenfrequencies in the above setting are given by solutions $\omega \in \mathbb{C} \setminus \cup_{\alpha \in Y^*}\sigma(\mathcal{C}^\alpha)$ to the equation
                    \begin{align}\label{eq:omega_def_mode}
                        0 = \int_{Y^*}\frac{
                        \omega^4 - 2(a - ib)(C^\alpha)_{11}\omega^2 + (a-ib)^2\det(C^\alpha)
                        }{
                        \omega^4 - 2a(C^\alpha)_{11}\omega^2 + \lvert(a+ib)\rvert^2 \det(C^\alpha)
                        }d\alpha.
                    \end{align}
            \end{lemma}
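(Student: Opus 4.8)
The plan is to specialize the general characterization of defect mode eigenfrequencies from Theorem \ref{thm:defect} to the present $\mathcal{PT}$-symmetric single-defect setting, and then to reduce the resulting determinant condition to the scalar integral equation \eqref{eq:omega_def_mode} by direct $2\times 2$ matrix algebra. Since $\mathcal{X}$ has exactly one nonzero block $X_0$, the infinite determinant in \eqref{eq:general_eq_for_defect} collapses to the finite determinant $\det(\Id_2 - X_0\, T^0)$, where $\Id_2$ is the $2 \times 2$ identity, $T^0 = \mathcal{T}(\omega)_{0,0}$, and $X_0$ has only the $(1,1)$-entry $-2ib/(a+ib)$ nonzero. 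First I would exploit this sparsity: because the second row of $X_0$ vanishes, the matrix $\Id_2 - X_0 T^0$ is upper triangular, so its determinant equals $1 - (X_0)_{11}(T^0)_{11} = 1 + \tfrac{2ib}{a+ib}(T^0)_{11}$. Hence the defect condition is equivalent to the single scalar equation $(T^0)_{11} = -\tfrac{a+ib}{2ib}$.

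The next step is to compute $(T^0)_{11}$ explicitly. Writing $C_\alpha = \left(\begin{smallmatrix} C_{11} & C_{12} \\ \overline{C_{12}} & C_{11}\end{smallmatrix}\right)$ and $\mathcal{C}^\alpha = \diag(a+ib, a-ib)\,C_\alpha$, I would apply the cofactor (adjugate) formula for the inverse of the $2\times 2$ matrix $\mathcal{C}^\alpha - \omega^2\Id_2$. Its determinant is $\det(\mathcal{C}^\alpha - \omega^2\Id_2) = \omega^4 - 2a C_{11}\omega^2 + (a^2+b^2)\det(C_\alpha)$, which already reproduces the denominator in \eqref{eq:omega_def_mode} since $\lvert a+ib\rvert^2 = a^2+b^2$. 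Multiplying $\mathcal{C}^\alpha$ by the adjugate and reading off the $(1,1)$-entry then gives $\bigl(\mathcal{C}^\alpha(\mathcal{C}^\alpha - \omega^2\Id_2)^{-1}\bigr)_{11} = N_1/D$, with numerator $N_1 = (a^2+b^2)\det(C_\alpha) - (a+ib)\omega^2 C_{11}$ and $D$ the denominator above, so that $(T^0)_{11}$ is $-1/\lvert Y^*\rvert$ times the integral of $N_1/D$ over $Y^*$.

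Finally, I would insert this into the scalar condition and clear denominators. The crux is the algebraic identity $(a+ib) D - 2ib\,N_1 = (a+ib)\,N_{\mathrm{lem}}$, where $N_{\mathrm{lem}} = \omega^4 - 2(a-ib)C_{11}\omega^2 + (a-ib)^2\det(C_\alpha)$ is the numerator appearing in \eqref{eq:omega_def_mode}; this is verified by collecting the $\omega^4$, $\omega^2 C_{11}$, and $\det(C_\alpha)$ terms separately (using $(a+ib)(a-ib)=a^2+b^2$ and $-2a+2ib = -2(a-ib)$). Since the constant $\tfrac{a+ib}{2ib}$ integrates, after the $1/\lvert Y^*\rvert$ normalization, back to itself, the equation $(T^0)_{11} = -\tfrac{a+ib}{2ib}$ rewrites as $\tfrac{1}{\lvert Y^*\rvert}\int_{Y^*}\bigl(\tfrac{a+ib}{2ib} - N_1/D\bigr)\,d\alpha = 0$, and by the identity this equals $\tfrac{a+ib}{2ib\lvert Y^*\rvert}\int_{Y^*} N_{\mathrm{lem}}/D\,d\alpha$. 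Dividing out the nonzero factor yields $\int_{Y^*} N_{\mathrm{lem}}/D\,d\alpha = 0$, which is exactly \eqref{eq:omega_def_mode}. I do not expect any serious obstacle here: the only points needing care are the reduction of the infinite determinant to the $2\times 2$ one (guaranteed by Theorem \ref{thm:defect} together with the sparsity of $\mathcal{X}$) and the bookkeeping in the final identity, both of which are routine once the explicit forms of $X_0$ and $\mathcal{C}^\alpha$ are in hand.
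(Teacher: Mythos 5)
Your proposal is correct and follows essentially the same route as the paper's proof: reduce the infinite determinant condition of Theorem \ref{thm:defect} to the scalar equation $1 - (X_0)_{11}(T^0)_{11} = 0$ using the sparsity of $\mathcal{X}$, compute $(\mathcal{C}^\alpha(\mathcal{C}^\alpha - \omega^2\Id)^{-1})_{11}$ by the $2\times 2$ adjugate formula, and rearrange to obtain \eqref{eq:omega_def_mode}. Your algebraic bookkeeping (the identity $(a+ib)D - 2ib\,N_1 = (a+ib)N_{\mathrm{lem}}$) checks out and is just a reorganized version of the paper's final display, with the added bonus that you justify the collapse of the $2\times2$ determinant via the triangular structure, which the paper states without comment.
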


            \begin{proof}
                From \Cref{thm:defect}, we know that whenever $\omega \in \mathbb{C} \setminus \cup_{\alpha \in Y^*}\sigma(\mathcal{C}^\alpha)$, then $\omega$ corresponds to a defect mode eigenfrequency if and only if it satisfies the equation
                    \begin{align*}
                        \det(\Id - \mathcal{X}\mathcal{T}(\omega)) = 0.
                    \end{align*}
                Since only one resonator is modified, this reduces to the equation
                    \begin{equation} \label{eq11}
                        1 - X_0^1(\mathcal{T})_{11} = 0.
                    \end{equation}
                Then we compute
                    \begin{align*}
                        &(\mathcal{C}^\alpha(\mathcal{C}^\alpha - \omega^2 \Id)^{-1})_{11}\\ 
                        &= \frac{1}{\det(\mathcal{C}^\alpha - \omega^2 \Id)}
                        \left(\begin{pmatrix}
                            a+ib & 0 \\
                            0 & a-ib
                        \end{pmatrix}\begin{pmatrix}
                            C_{11}(\alpha) & C_{12}(\alpha) \\
                            \overline{C_{12}(\alpha)} & C_{11}(\alpha)
                        \end{pmatrix}\begin{pmatrix}
                            (a-ib)C_{11}(\alpha)-\omega^2 & -(a+ib)C_{12}(\alpha) \\
                            -\overline{(a+ib)C_{12}(\alpha)} & (a+ib)C_{11}(\alpha)-\omega^2
                        \end{pmatrix}\right)_{11}\\
                        &= \frac{(a+ib)}{\det(\mathcal{C}^\alpha - \omega^2 \Id)}
                        (
                            -C_{11}(\alpha)\omega^2 + ((a-ib)C_{11}(\alpha)^2-(a-ib)\lvert C_{12}(\alpha)\rvert^2 )
                        )\\
                        &= \frac{(a+ib)}{\det(\mathcal{C}^\alpha - \omega^2 \Id)}
                        (-C_{11}(\alpha)\omega^2 + (a-ib)\det(C^\alpha))\\
                        &= \frac{(a+ib)(-C_{11}(\alpha)\omega^2 + (a-ib)\det(C^\alpha))}{
                            \omega^4 - 2a(C^\alpha)_{11}\omega^2 + \lvert(a+ib)\rvert^2 \det(C^\alpha)
                            }.
                        \\
                    \end{align*}

                Putting this into the determinant formula (\ref{eq11}), 
                we obtain that
                    \begin{align*}
                        0 &= 1 + \frac{-2ib}{a+ib}\frac{1}{\lvert Y^*\rvert}\int_{Y^*}\mathcal{C}^\alpha(\mathcal{C}^\alpha - \omega^2 \Id)^{-1}d\alpha\\
                        &= 1 + \frac{-2ib}{a+ib}\frac{1}{\lvert Y^*\rvert}\int_{Y^*}\frac{(a+ib)(-C_{11}(\alpha)\omega^2 + (a-ib)\det(C^\alpha))}{
                            \omega^4 - 2a(C^\alpha)_{11}\omega^2 + \lvert(a+ib)\rvert^2 \det(C^\alpha)
                            }d\alpha\\
                        &= \int_{Y^*}\frac{
                            \omega^4 - 2(a - ib)(C^\alpha)_{11}\omega^2 + (a-ib)^2\det(C^\alpha)
                            }{
                            \omega^4 - 2a(C^\alpha)_{11}\omega^2 + \lvert(a+ib)\rvert^2 \det(C^\alpha)
                                }d\alpha. \qedhere
                    \end{align*}

            \end{proof}

            \begin{figure}[h]
                \includegraphics[width = 0.49\textwidth]{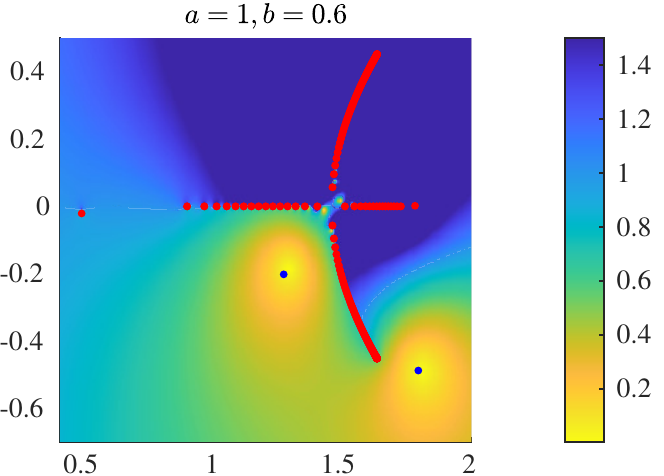}
                \includegraphics[width = 0.49\textwidth]{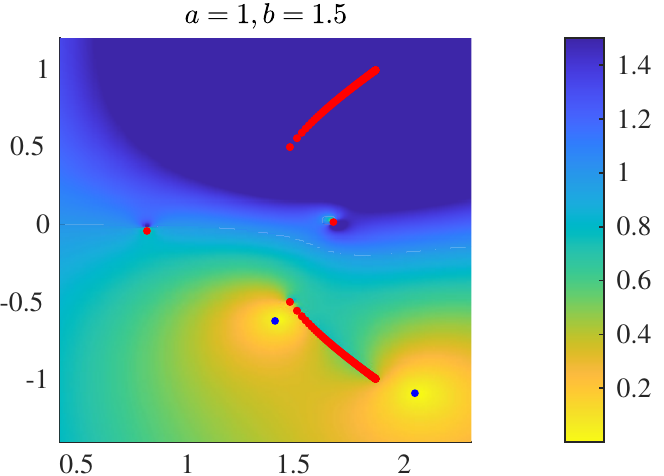}
                \caption{As an application and with the notation from Lemma \ref{lem:formula_defect_freq_PT} two simulations of two different $\mathcal{PT}$-symmetric materials with loss defect are depicted. The geometric structure is described in Figure \ref{fig:def_structure_loss}. The heatmap shows the absolute value of the right-hand side of equation \eqref{eq:omega_def_mode}, where all values bigger than $1.5$ are plotted in dark blue. In red is indicated a discretization of the continuous spectrum of the infinite material (199 discretization steps). This discretization of the Brillouin zone was then used to compute the heatmap. In blue is indicated an approximation of the defect mode eigenfrequency, when using a finite material with 200 fundamental domains to approximate the defected infinite material.}
                \label{fig:simulation_PT_loss_defect}
            \end{figure}
        
            Simulations of two different $\mathcal{PT}$-symmetric materials with loss defect are depicted in Figure \ref{fig:simulation_PT_loss_defect}. In this figure, the predicted defect mode eigenfrequencies of a truncated finite structure is also depicted and one sees that a finite structure is a good approximation for the exact prediction in the infinite structure. This gives an example where the spectral approximation theory developed in \cite{finite_infinite} for finite but large hermitian chains of resonators applies to the non-hermitian case. %Note that this is not in general the case; see \cite{skin_effect}. 

            All numerical results in this paper are obtained by the following way. The generalized quasiperiodic capacitance matrix is computed by using a multipole approximation for the single layer potential \cite[Appendix C]{bandgap}. The remaining numerical computations mainly rely on evaluations of the integrals, which occur in the different characterizing equations. These are computed using the end point rule with constant step sizes.

            The following observation will transform Lemma \ref{lem:formula_defect_freq_PT} into the corresponding lemma for a gain defect.
            Conjugation of the material parameters gives a bijection between the set of $\mathcal{PT}$-symmetric materials with loss defect and the set of $\mathcal{PT}$-symmetric materials with gain defect. Thus, conjugating the material parameters in the equation of Lemma \ref{lem:formula_defect_freq_PT} transforms the equation (\ref{eq11}) for loss defects into the corresponding equation for gain defects, proving  the following lemma.

            \begin{lemma}
                Let $D_1, D_2 \subset [0,1)\times \mathbb{R}^2$ be two equally sized spherical resonators, which are repeated periodically with $\Lambda = (1,0,0)^T\mathbb{Z}$ and let the $\Lambda$-periodic material parameters be given by
                    \begin{align*}
                        \delta_j (v_j)^2 = \begin{cases}
                            a - ib, &\text{if } j = 1,\\
                            a + ib, &\text{if } j = 2.
                        \end{cases}
                    \end{align*}
                Assume that the resonator $D_1^0$ is defected and has its material parameter given by $\overline{\delta_1(v_1)^2} = a+ib$. Then, the frequencies  $\omega$ of the defect modes in this setting are given by the solutions to the following equation 
                    \begin{align*}
                        0 = \int_{Y^*}\frac{
                        \omega^4 - 2(a + ib)(C^\alpha)_{11}\omega^2 + (a+ib)^2\det(C^\alpha)
                        }{
                        \omega^4 - 2a(C^\alpha)_{11}\omega^2 + \lvert(a+ib)\rvert^2 \det(C^\alpha)
                        }d\alpha.
                    \end{align*}
            \end{lemma}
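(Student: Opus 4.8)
The plan is to avoid repeating the computation of Lemma~\ref{lem:formula_defect_freq_PT} and instead to exploit the elementary symmetry that the gain-defect configuration is nothing but the loss-defect configuration of that lemma after complex-conjugating all material parameters, equivalently after the substitution $b \mapsto -b$. The point that makes this work is that the purely geometric capacitance matrix $C^\alpha$, with entries $(C^\alpha)_{11}$, $C_{12}(\alpha)$ and $\det(C^\alpha)$, depends only on the common shape and positions of the (equally sized, spherical) resonators and is therefore \emph{independent} of the parameters $a$ and $b$. Consequently it is left untouched by the substitution, and only the scalar prefactors involving $a \pm ib$ are affected.

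First I would make the identification precise. Writing $\mathrm{Loss}(a,b)$ for the configuration of Lemma~\ref{lem:formula_defect_freq_PT} and $\mathrm{Gain}(a,b)$ for the present one, I claim that $\mathrm{Gain}(a,b) = \mathrm{Loss}(a,-b)$ as physical systems: resonator $D_1$ carries $a-ib = a + i(-b)$, resonator $D_2$ carries $a+ib = a - i(-b)$, and the defected resonator $D_1^0$ carries $a+ib$, which is exactly the complex conjugate $\overline{a+i(-b)}$ of the resonator-$1$ parameter of $\mathrm{Loss}(a,-b)$. Hence the defect at $D_1^0$ in $\mathrm{Gain}(a,b)$ produces the relative change $x_1^0 = \tfrac{2ib}{a-ib}$, which coincides with the value obtained from the single non-zero block $X_0$ of $\mathrm{Loss}(a,-b)$ after the substitution $b\mapsto -b$; thus the determinant equation \eqref{eq11} is the same for the two systems. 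Since the two systems are literally equal, they also share the same quasiperiodic capacitance spectra $\bigcup_{\alpha}\sigma(\mathcal{C}^\alpha)$, so the admissible set $\mathbb{C}\setminus\bigcup_\alpha \sigma(\mathcal{C}^\alpha)$ and the defect mode eigenfrequencies are identical.

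It then remains to apply Lemma~\ref{lem:formula_defect_freq_PT} with $b$ replaced by $-b$ to $\mathrm{Loss}(a,-b)$ and read off the resulting equation for $\mathrm{Gain}(a,b)$. Substituting $-b$ for $b$ in \eqref{eq:omega_def_mode} turns each factor $a-ib$ into $a+ib$ in the numerator, while the denominator is unchanged because $\lvert a - ib\rvert^2 = \lvert a+ib\rvert^2 = a^2+b^2$ and because $(C^\alpha)_{11}$ and $\det(C^\alpha)$ are invariant; this produces precisely the asserted integral identity. I do not expect a genuine obstacle here: the only thing to watch is the bookkeeping of the conjugation, namely confirming that the geometric quantities $(C^\alpha)_{11}$ and $\det(C^\alpha)$ are indeed unaffected by $b\mapsto -b$ (they are, being independent of $a,b$) and that the denominator in \eqref{eq:omega_def_mode} is even in $b$. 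Equivalently, one may phrase the entire argument as conjugating equation \eqref{eq11} directly, as suggested by the bijection between loss and gain materials, and reach the same conclusion.
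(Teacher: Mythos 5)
Your proof is correct and takes essentially the same route as the paper: the paper's own (one-paragraph) argument derives the gain-defect equation from Lemma~\ref{lem:formula_defect_freq_PT} by conjugating the material parameters, which for real $a,b$ is exactly your substitution $b \mapsto -b$, since conjugation fixes the purely geometric quantities $(C^\alpha)_{11}$ and $\det(C^\alpha)$. Your write-up merely makes the bookkeeping explicit (the identification $\mathrm{Gain}(a,b)=\mathrm{Loss}(a,-b)$, the invariance of the admissible set, and the evenness in $b$ of the denominator of \eqref{eq:omega_def_mode}), which is a welcome but not essentially different refinement.
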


        \subsubsection{General periodic system of resonator 
        dimers with a single or a double defect}\label{subsec:material_double_and_single_defect}
            In this section, we drop the $\mathcal{PT}$-symmetry assumption on the periodic material without defect, but treat the case of general, disjoint resonators $D_1, D_2 \subset [0,1) \times \mathbb{R}^2$ with general, periodic material parameters $V_1$ and $V_2$, respectively. Furthermore, any constraints on the defect material parameters are lifted, allowing, in the next section, to design defected materials with prescribed defect mode eigenfrequencies.

            \begin{figure}[h]
                \centering
                \includegraphics{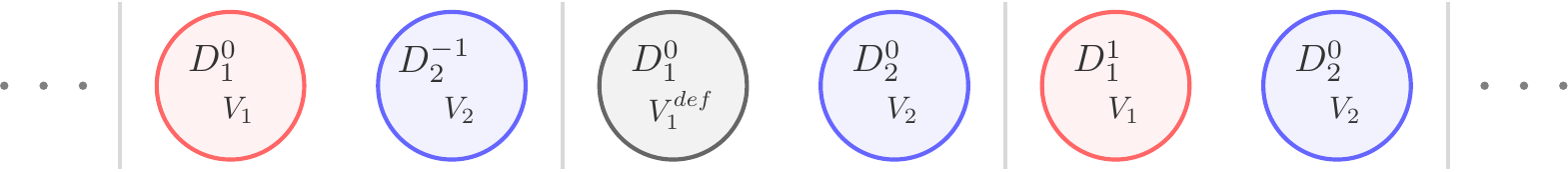}
                \caption{Notation convention of Lemma \ref{lem:formula_defect_freq_single}.}
                \label{fig:def_structure_with_defect_simplified}
            \end{figure}

            \begin{figure}
                \centering
                    \centering
                    \includegraphics[width=0.49\textwidth]{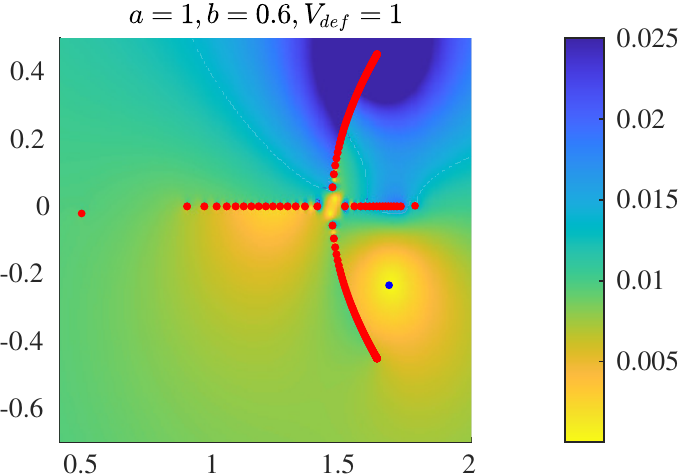}
                    \includegraphics[width=0.49\textwidth]{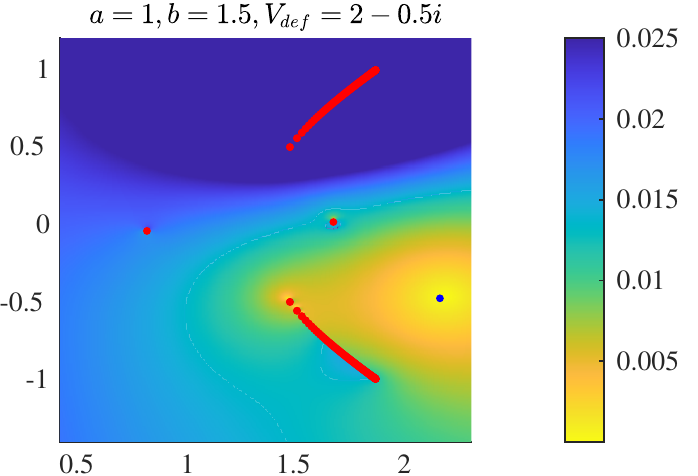}
                    \caption{As an application example of Lemma \ref{lem:formula_defect_freq_single}, two simulations of two single defects are depicted. The radius of the spherical resonators is given by $R= 0.15$, the centers of the resonators $D_1$ and $D_2$ are located at $(0.25,0,0)^T$ and $(0.75,0,0)^T$, respectively. The periodicity lattice used is $\Lambda = (1,0,0)^T\mathbb{Z}$. The periodic material parameters are given by $V_1 = a+ib$ and $V_2 = a-ib$. The heatmap shows the absolute value of the right-hand side of equation \eqref{eq:gen_single_def}, where all values bigger than $0.025$ are plotted in dark blue. In red is indicated a discretization of the continuous spectrum of the infinite material (199 discretization steps). This discretization of the Brillouin zone was then used to compute the heatmap. In blue is indicated an approximation of the defect mode eigenfrequency, when using a finite material with 200 fundamental domains to approximate the defected infinite material.}
            \end{figure}

            Considering first a single defected resonator, we obtain the following result. 

            \begin{lemma}\label{lem:formula_defect_freq_single}
                Let $D_1, D_2 \subset [0,1) \times \mathbb{R}^2$ be a pair of disjoint resonators which is repeated periodically with respect to $\Lambda = (1,0,0)^T\mathbb{Z}$. Set the material parameters as
                    \begin{align*}
                        \delta_1(v_1)^2 =: V_1 \in \mathbb{C},\\
                        \delta_2(v_2)^2 =: V_2 \in \mathbb{C},
                    \end{align*}
                as depicted in Figure \ref{fig:def_structure_with_defect_simplified}. Denote by $Y^* := \mathbb{R}/2\pi\mathbb{Z} \times \{0\}^2$ the associated Brillouin zone and by $\mathcal{C^\alpha}$ the associated capacitance matrix. Assume that the resonator $D_1$ of the zeroth cell is defected and its material parameter is given by $V_1^{def} \in \mathbb{C}$. Then, $\omega \in \mathbb{C} \setminus \cup_{\alpha \in Y^*}\sigma(\mathcal{C}^\alpha)$ is a defect mode eigenfrequency of the Helmholtz problem \eqref{eq:Helmholtz_prob} if and only if it satisfies
                    \begin{align}\label{eq:gen_single_def}
                        0 = V_1 + (V_1^{def} - V_1)\frac{1}{\lvert Y^*\rvert}\int_{Y^*}\frac{\det(\mathcal{C}^\alpha) - \omega^2\mathcal{C}^\alpha_{11}}{\det(\mathcal{C}^\alpha - \omega^2\Id)}d\alpha.
                    \end{align}
            \end{lemma}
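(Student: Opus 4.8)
The plan is to follow the same strategy as in the proof of Lemma \ref{lem:formula_defect_freq_PT}, specializing Theorem \ref{thm:defect} to the present single-defect configuration. By Theorem \ref{thm:defect}, a frequency $\omega \in \mathbb{C}\setminus\cup_{\alpha\in Y^*}\sigma(\mathcal{C}^\alpha)$ is a defect mode eigenfrequency if and only if $\det(\Id - \mathcal{X}\mathcal{T}(\omega)) = 0$. The first step is to translate the defect description into the multiplicative form required by that theorem: replacing the parameter $V_1 = \delta_1 v_1^2$ of the resonator $D_1^0$ by $V_1^{def}$ amounts to writing $V_1^{def} = V_1(1 + x_1^0)$, hence $x_1^0 = (V_1^{def} - V_1)/V_1$. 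All other $x_i^m$ vanish, so $\mathcal{X}$ has a single nonzero block $X_0 = \diag\bigl((V_1^{def}-V_1)/V_1,\, 0\bigr)$. As a sanity check, in the $\mathcal{PT}$-symmetric case $V_1 = a+ib$, $V_1^{def} = a-ib$, this recovers $(X_0)_{11} = -2ib/(a+ib)$ from Lemma \ref{lem:formula_defect_freq_PT}.

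Since $\mathcal{X}$ has only one nonzero scalar entry, the infinite determinant collapses, as noted after Theorem \ref{thm:defect}, to the scalar equation $1 - (X_0)_{11}\,\mathcal{T}(\omega)_{0,0;11} = 0$, where $\mathcal{T}(\omega)_{0,0;11}$ denotes the $(1,1)$ entry of the diagonal block $T^0 = -\tfrac{1}{\lvert Y^*\rvert}\int_{Y^*}\mathcal{C}^\alpha(\mathcal{C}^\alpha - \omega^2\Id)^{-1}\,d\alpha$.

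The computational heart is evaluating the $(1,1)$ entry of $\mathcal{C}^\alpha(\mathcal{C}^\alpha - \omega^2\Id)^{-1}$. Using the $2\times 2$ adjugate formula for $(\mathcal{C}^\alpha - \omega^2\Id)^{-1}$ and multiplying out, the $(1,1)$ entry simplifies to $\bigl(\det(\mathcal{C}^\alpha) - \omega^2\mathcal{C}^\alpha_{11}\bigr)/\det(\mathcal{C}^\alpha - \omega^2\Id)$, exactly as in the corresponding step of Lemma \ref{lem:formula_defect_freq_PT} but now without invoking $\mathcal{PT}$-symmetry. Substituting this into $T^0$ and then into the scalar equation gives $1 + \tfrac{V_1^{def}-V_1}{V_1}\cdot\tfrac{1}{\lvert Y^*\rvert}\int_{Y^*}\tfrac{\det(\mathcal{C}^\alpha)-\omega^2\mathcal{C}^\alpha_{11}}{\det(\mathcal{C}^\alpha-\omega^2\Id)}\,d\alpha = 0$; clearing the denominator $V_1$ yields the claimed equation \eqref{eq:gen_single_def}.

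I do not expect a genuine obstacle here: the argument is a direct specialization of Theorem \ref{thm:defect} combined with routine $2\times 2$ linear algebra. The only point demanding care is the bookkeeping in the first step, namely correctly reading off $x_1^0 = (V_1^{def}-V_1)/V_1$ from the convention $\delta_i v_i^2(1+x_i^m)$, and keeping track of the sign coming from the minus in the definition of $T^0$ so that the final signs match. The $\mathcal{PT}$ cross-check mentioned above is the cleanest way to guard against such sign errors.
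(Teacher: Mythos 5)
Your proposal is correct and follows essentially the same route as the paper's own proof: specialization of Theorem \ref{thm:defect} to the single nonzero block $X_0$ with $(X_0)_{11} = (V_1^{def}-V_1)/V_1$, reduction to the scalar equation $1 - (X_0)_{11}(\mathcal{T})_{11} = 0$, the adjugate computation giving $(\mathcal{C}^\alpha(\mathcal{C}^\alpha-\omega^2\Id)^{-1})_{11} = \bigl(\det(\mathcal{C}^\alpha)-\omega^2\mathcal{C}^\alpha_{11}\bigr)/\det(\mathcal{C}^\alpha-\omega^2\Id)$, and clearing the factor $V_1$. The $\mathcal{PT}$-symmetric cross-check is a nice addition but not a substantive difference.
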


            \begin{proof}
                Again, from \Cref{thm:defect}, we know that whenever $\omega \in \mathbb{C} \setminus \cup_{\alpha \in Y^*}\sigma(\mathcal{C}^\alpha)$, then $\omega$ corresponds to a defect mode eigenfrequency if and only if it satisfies the equation
                            \begin{align*}
                                \det(\Id - \mathcal{X}\mathcal{T}(\omega)) = 0.
                            \end{align*}
                        Since only one resonator is modified, this reduces to the equation
                            \begin{align*}
                                1 - X_0^1(\mathcal{T})_{11} = 0.
                            \end{align*}
                        Then we compute
                            \begin{align*}
                                (\mathcal{C}^\alpha(\mathcal{C}^\alpha - \omega^2 \Id)^{-1})_{11} &= \left(\mathcal{C}^\alpha\frac{1}
                                {\det(\mathcal{C}^\alpha - \omega^2\Id)}(\det(\mathcal{C}^\alpha)(\mathcal{C}^\alpha)^{-1} - \omega^2\Id)\right)_{11}\\
                                &= \left(\frac{1}{\det(\mathcal{C}^\alpha - \omega^2\Id)}(\det(\mathcal{C}^\alpha)\Id - \omega^2\mathcal{C}^\alpha)\right)_{11}\\
                                &= \frac{\det(\mathcal{C}^\alpha) - \omega^2\mathcal{C}^\alpha_{11}}{\det(\mathcal{C}^\alpha - \omega^2\Id)},
                            \end{align*}
                        and put this into the determinant formula to arrive at
                        \begin{align*}
                            0 &= 1 + X_0^1\frac{1}{\lvert Y^*\rvert}\int_{Y^*}\frac{\det(\mathcal{C}^\alpha) - \omega^2\mathcal{C}^\alpha_{11}}{\det(\mathcal{C}^\alpha - \omega^2\Id)}d\alpha\\
                            &= 1 + (V_1^{def}/V_1 - 1)\frac{1}{\lvert Y^*\rvert}\int_{Y^*}\frac{\det(\mathcal{C}^\alpha) - \omega^2\mathcal{C}^\alpha_{11}}{\det(\mathcal{C}^\alpha - \omega^2\Id)}d\alpha,
                        \end{align*}
                        which is equivalent to
                        \begin{align*}
                            0 = V_1 + (V_1^{def} - V_1)\frac{1}{\lvert Y^*\rvert}\int_{Y^*}\frac{\det(\mathcal{C}^\alpha) - \omega^2\mathcal{C}^\alpha_{11}}{\det(\mathcal{C}^\alpha - \omega^2\Id)}d\alpha,
                        \end{align*}
                        as desired.
            \end{proof}

            Similarly, in the setting of two defected resonators which lie in the same fundamental domain, we obtain the following characterization of the defect mode eigenfrequencies. 

            \begin{lemma}\label{lem:formula_defect_freq_double}
                Let $D_1, D_2 \subset [0,1) \times \mathbb{R}^2$ be a pair of disjoint resonators which are repeated periodically with respect to $\Lambda = (1,0,0)^T\mathbb{Z}$. Set the material parameters as
                    \begin{align*}
                        \delta_1(v_1)^2 =: V_1 \in \mathbb{C},\\
                        \delta_2(v_2)^2 =: V_2 \in \mathbb{C},
                    \end{align*}
                and denote by $Y^* := \mathbb{R}/2\pi\mathbb{Z} \times \{0\}^2$ the associated Brillouin zone and by $\mathcal{C^\alpha}$ the associated quasiperiodic generalized capacitance matrix. Assume that the zeroth cell is defected and its material parameters are given by $V_1^{def}$ and $V_2^{def}$. Then, $\omega \in \mathbb{C} \setminus \cup_{\alpha \in Y^*}\sigma(\mathcal{C}^\alpha)$ is a defect mode eigenfrequency of the Helmholtz problem \eqref{eq:Helmholtz_prob} if and only if it satisfies 
                    \begin{align*}
                        0 = \det\left(V + (V^{def} - V)\frac{1}{\abs{Y^*}}\int_{Y^*}\frac{1}{\det(\mathcal{C}^\alpha - \omega^2\Id)}\left[
                            \det(\mathcal{C}^\alpha)\Id - \omega^2 \mathcal{C}^\alpha
                        \right]d\alpha\right),
                    \end{align*}
                where
                    \begin{align*}
                        V := \begin{pmatrix}
                            V_1 & 0\\
                            0 & V_2
                        \end{pmatrix}, \quad V^{def} := \begin{pmatrix}
                            V_1^{def} & 0\\
                            0 & V_2^{def}
                        \end{pmatrix}.
                    \end{align*}
            \end{lemma}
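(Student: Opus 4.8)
The plan is to follow exactly the template of the proof of Lemma \ref{lem:formula_defect_freq_single}, the only structural difference being that now the single non-zero block $X_0$ is a genuine $2\times 2$ matrix rather than a matrix with one non-zero entry, so the characterizing determinant does not collapse to a scalar equation. I start from Theorem \ref{thm:defect}: since $\omega \in \mathbb{C}\setminus\cup_{\alpha\in Y^*}\sigma(\mathcal{C}^\alpha)$, it is a defect mode eigenfrequency if and only if $\det(\Id - \mathcal{X}\mathcal{T}(\omega)) = 0$. Because only the zeroth cell is defected, $\mathcal{X}$ has a single non-zero block $X_0$, so the infinite block determinant reduces to the finite $2\times 2$ determinant $\det(\Id - X_0\, T^0(\omega)) = 0$, where $T^0(\omega) = \mathcal{T}(\omega)_{0,0}$ and $\Id$ is the $2\times 2$ identity.

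Next I would record the diagonal block $X_0$ and simplify $T^0(\omega)$. With the convention that the defected parameters are $V_i^{def} = V_i(1 + x_i^0)$, one has $x_i^0 = (V_i^{def} - V_i)/V_i$, so that $X_0 = (V^{def} - V)V^{-1}$ with $V = \diag(V_1,V_2)$ and $V^{def} = \diag(V_1^{def},V_2^{def})$. For $T^0(\omega)$ I reuse the matrix identity already established inside the proof of Lemma \ref{lem:formula_defect_freq_single} (which is just the $2\times 2$ adjugate/Cayley--Hamilton identity), namely
\[
\mathcal{C}^\alpha(\mathcal{C}^\alpha - \omega^2\Id)^{-1} = \frac{\det(\mathcal{C}^\alpha)\Id - \omega^2\mathcal{C}^\alpha}{\det(\mathcal{C}^\alpha - \omega^2\Id)},
\]
and integrate over $Y^*$ to obtain $T^0(\omega) = -\tfrac{1}{\abs{Y^*}}\int_{Y^*}\frac{\det(\mathcal{C}^\alpha)\Id - \omega^2\mathcal{C}^\alpha}{\det(\mathcal{C}^\alpha - \omega^2\Id)}\,d\alpha =: -M(\omega)$.

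The characterizing equation then becomes $\det\bigl(\Id + (V^{def} - V)V^{-1}M(\omega)\bigr) = 0$, and the final step is to rewrite this in the symmetric-looking form stated in the lemma. This is the only place requiring a little care, since $M(\omega)$ need not commute with the diagonal matrices. The clean device is to multiply inside the determinant on the left by $V$: as $V$ and $V^{def} - V$ are both diagonal they commute, so $V(V^{def}-V)V^{-1} = V^{def} - V$, giving
\[
V\bigl(\Id + (V^{def} - V)V^{-1}M(\omega)\bigr) = V + (V^{def} - V)M(\omega).
\]
Taking determinants and using $\det(V) = V_1V_2 \neq 0$ shows that $\det(\Id + (V^{def}-V)V^{-1}M(\omega)) = 0$ is equivalent to $\det(V + (V^{def} - V)M(\omega)) = 0$, which is exactly the claimed equation. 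The main obstacle is thus purely bookkeeping: correctly tracking the sign coming from $T^0 = -M$ and respecting the non-commutativity of $M(\omega)$ with $V^{def}-V$ via left-multiplication by $V$; everything else is a direct transcription of the single-defect argument to the $2\times 2$ block.
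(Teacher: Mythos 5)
Your proof is correct and is essentially the paper's own argument: reduce via Theorem \ref{thm:defect} to the single-block equation $\det(\Id - X_0T^0(\omega)) = 0$, apply the $2\times2$ adjugate identity $\mathcal{C}^\alpha(\mathcal{C}^\alpha-\omega^2\Id)^{-1} = \bigl(\det(\mathcal{C}^\alpha)\Id-\omega^2\mathcal{C}^\alpha\bigr)/\det(\mathcal{C}^\alpha-\omega^2\Id)$, then left-multiply by $V$ inside the determinant. In fact your sign bookkeeping is more careful than the paper's: the paper's displayed proof drops the minus sign carried by $\mathcal{T}(\omega)_{0,0}$ (it starts from $\det\bigl(\Id - (V^{def}V^{-1}-\Id)\tfrac{1}{\abs{Y^*}}\int_{Y^*}\mathcal{C}^\alpha(\mathcal{C}^\alpha-\omega^2\Id)^{-1}d\alpha\bigr)=0$) and consequently ends with $\det\bigl(V - (V^{def}-V)\cdots\bigr)=0$, contradicting the plus sign in the lemma's statement; your plus sign is the correct one, as confirmed both by Theorem \ref{thm:defect} and by the check that setting $V_2^{def}=V_2$ collapses your equation to Lemma \ref{lem:formula_defect_freq_single}.
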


            \begin{proof}
                Applying the formula for the inverse of a $2 \times 2$-matrix to the matrix $(\mathcal{C}^\alpha - \omega^2\Id)^{-1}$, one obtains that 
                    \begin{align*}
                        (\mathcal{C}^\alpha - \omega^2\Id)^{-1} &= \frac{1}{\det(\mathcal{C}^\alpha - \omega^2 \Id)}\left[\begin{pmatrix}
                            V_2\mathcal{C}^\alpha_{22} &V_1\mathcal{C}^\alpha_{12} \\
                        -V_2\mathcal{C}^\alpha_{21} &V_1\mathcal{C}^\alpha_{11}
                    \end{pmatrix} - \omega^2 \Id \right] \\
                    &= \frac{1}{\det(\mathcal{C}^\alpha - \omega^2\Id)}\left[\det(\mathcal{C}^\alpha)(\mathcal{C}^\alpha)^{-1} - \omega^2 \Id \right].
                    \end{align*}
                Putting this into the equation 
                    \begin{align*}
                        0 = \det\left(\Id - (V^{def}V^{-1}- \Id)\frac{1}{\abs{Y^*}}\int_{Y^*}\mathcal{C}^\alpha (\mathcal{C}^\alpha - \omega^2\Id)^{-1}d\alpha\right),
                    \end{align*}
                we get
                    \begin{align*}
                        0 = \det\left(\Id - (V^{def}V^{-1} - \Id)\frac{1}{\abs{Y^*}}\int_{Y^*}\frac{1}{\det(\mathcal{C}^\alpha - \omega^2\Id)}\left[
                            \det(\mathcal{C}^\alpha)\Id - \omega^2 \mathcal{C}^\alpha
                        \right]d\alpha\right).
                    \end{align*} Then pulling out $V$ yields
                    \begin{align*}
                        0 = \det\left(V - (V^{def} - V)\frac{1}{\abs{Y^*}}\int_{Y^*}\frac{1}{\det(\mathcal{C}^\alpha - \omega^2\Id)}\left[
                            \det(\mathcal{C}^\alpha)\Id - \omega^2 \mathcal{C}^\alpha
                        \right]d\alpha\right),
                    \end{align*}
                    which proves the lemma.
            \end{proof}

        \begin{remark}
            The results in Subsection \ref{subsec:material_double_and_single_defect} do not rely on the fact that a \emph{chain} of dimers is considered. That is, in the above two lemmas, Lemmas \ref{lem:formula_defect_freq_single} and \ref{lem:formula_defect_freq_double}, the sublattice $\Lambda = (1,0,0)^T\mathbb{Z}$ can be replaced with any $1,2$ or $3$-dimensional sublattice of $\mathbb{R}^3$ without changing the results and the characterizing equations.
        \end{remark}

        Both results will help us in the next section to design defected materials with defect modes at given defect mode eigenfrequencies.
        
    \subsection{Material design of defected resonator dimer systems with prescribed defect mode eigenfrequencies}\label{sec:Relation_between_defect_properties_and_localized_frequency}

        This section applies the characterization of defect mode eigenfrequencies in defected periodic materials to the question  whether it is possible to create defects which admit localized modes at a \emph{given} defect mode eigenfrequency $\omega$. It turns out that the answer to this question is, under mild assumptions, `yes'; see Theorem \ref{thm:formula_defect_freq}. Even more is true. In fact, it is possible to create a double defect that admits two localized modes $u_1$ and $u_2$ at given defect mode eigenfrequencies $\omega_1$ and $\omega_2$, as it is shown in the proof of Theorem \ref{thm:formula_double_defect_freq}. We conclude with a remark on realizing $n$ frequencies as defect mode eigenfrequencies in a material with $n$ defects.

        \begin{theorem}\label{thm:formula_defect_freq}
            Let $D_1, D_2 \subset [0,1) \times \mathbb{R}^2$ be a pair of disjoint resonators which is repeated periodically with respect to $\Lambda = (1,0,0)^T\mathbb{Z}$. Set the material parameters as (see e.g. Figure \ref{fig:def_structure_with_defect_simplified})
            \begin{align*}
                \delta_1(v_1)^2 =: V_1 \in \mathbb{C},\\
                \delta_2(v_2)^2 =: V_2 \in \mathbb{C},
            \end{align*}
            and denote by $Y^* := \mathbb{R}/2\pi\mathbb{Z} \times \{0\}^2$ the associated Brillouin zone and by $\mathcal{C^\alpha}$ the associated quasiperiodic generalized capacitance matrix.
            Let $W$ be the set of zeros of $\int_{Y^*}\frac{\det(\mathcal{C}^\alpha) - \omega^2\mathcal{C}^\alpha_{11}}{\det(\mathcal{C}^\alpha - \omega^2\Id)}d\alpha$, that is, $$ W := \left\{\omega \in \mathbb
            {C}\setminus \bigcup_{\alpha\in Y^*}\sigma(\mathcal{C}^\alpha) : \int_{Y^*}\frac{\det(\mathcal{C}^\alpha) - \omega^2\mathcal{C}^\alpha_{11}}{\det(\mathcal{C}^\alpha - \omega^2\Id)}d\alpha = 0\right\}.$$
            Then, the following map $\Psi: \mathbb{C}\setminus (\bigcup_{\alpha\in Y^*}\sigma(\mathcal{C}^\alpha)\cup W) \rightarrow \mathbb{C}$ associates to each $\omega$ a choice of defect material parameters $V^{def}(\omega)$ (for the resonator $D_1^0$) such that $\omega$ occurs as a defect mode eigenfrequency, when in the above periodic structure a defected fundamental domain is created with the defect material parameter given by $V^{def}(\omega)$. The map $\Psi$ is defined by 
            \begin{align*}
                \begin{matrix}
                    \Psi:\quad &\mathbb{C}\setminus \left(\bigcup_{\alpha\in Y^*}\sigma(\mathcal{C}^\alpha)\cup W\right) &\longrightarrow &\mathbb{C}\\
                    \\
                &\omega &\longmapsto &V^{def}(\omega),
                \end{matrix}
            \end{align*}
            where $V^{def}(\omega)$ is given by the following
            \begin{align*}
                V^{def}(\omega) = V_1\left(1 - \frac{1}{\frac{1}{\lvert Y^*\rvert}\int_{Y^*}\frac{\det(\mathcal{C}^\alpha) - \omega^2\mathcal{C}^\alpha_{11}}{\det(\mathcal{C}^\alpha - \omega^2\Id)}d\alpha}\right).
            \end{align*}
        \end{theorem}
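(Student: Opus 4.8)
The plan is to read off the formula for $V^{def}(\omega)$ directly from the characterizing equation \eqref{eq:gen_single_def} of Lemma \ref{lem:formula_defect_freq_single}, which already tells us exactly which values of $\omega$ and $V_1^{def}$ are compatible. First I would fix $\omega \in \mathbb{C}\setminus(\bigcup_{\alpha\in Y^*}\sigma(\mathcal{C}^\alpha)\cup W)$ and set
\begin{align*}
    I(\omega) := \frac{1}{\lvert Y^*\rvert}\int_{Y^*}\frac{\det(\mathcal{C}^\alpha) - \omega^2\mathcal{C}^\alpha_{11}}{\det(\mathcal{C}^\alpha - \omega^2\Id)}d\alpha.
\end{align*}
By definition of $W$, the hypothesis $\omega \notin W$ guarantees precisely that $I(\omega) \neq 0$, so that the reciprocal $1/I(\omega)$ appearing in the claimed formula is well-defined. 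This is the one genuine point where an assumption is used, and it is exactly the "mild technical condition" alluded to in the introduction.

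Next I would simply invert the relation. Lemma \ref{lem:formula_defect_freq_single} states that $\omega$ is a defect mode eigenfrequency if and only if
\begin{align*}
    0 = V_1 + (V_1^{def} - V_1)I(\omega).
\end{align*}
Treating this as a linear equation in the unknown $V_1^{def}$ and using $I(\omega)\neq 0$, I would solve for $V_1^{def}$, obtaining $V_1^{def} - V_1 = -V_1/I(\omega)$, hence
\begin{align*}
    V^{def}(\omega) = V_1\left(1 - \frac{1}{I(\omega)}\right),
\end{align*}
which is precisely the asserted expression for $\Psi(\omega)$. Since $V_1 \in \mathbb{C}$ is fixed by the problem data, this value lies in $\mathbb{C}$, so $\Psi$ is a well-defined map into $\mathbb{C}$ on the stated domain.

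To close the argument I would verify the converse direction needed for the theorem's claim, namely that the $V^{def}(\omega)$ just produced genuinely realizes $\omega$ as a defect mode eigenfrequency. This is immediate: substituting $V_1^{def} = V^{def}(\omega)$ back into the right-hand side of \eqref{eq:gen_single_def} gives $V_1 + (-V_1/I(\omega))\cdot I(\omega) = V_1 - V_1 = 0$, so the characterizing equation holds and Lemma \ref{lem:formula_defect_freq_single} yields that $\omega$ is indeed a defect mode eigenfrequency of the defected structure with parameter $V^{def}(\omega)$. There is no serious obstacle here; the content of the theorem is entirely contained in Lemma \ref{lem:formula_defect_freq_single}, and the only thing to watch is that the exclusion of $W$ (together with the spectral exclusion $\bigcup_\alpha \sigma(\mathcal{C}^\alpha)$, which is needed for the lemma to apply at all) is exactly what makes the division by $I(\omega)$ legitimate. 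The entire proof is therefore a one-line algebraic inversion of an already-established equivalence.
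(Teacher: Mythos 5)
Your proposal is correct and is exactly the paper's argument: the paper's proof consists of the single remark that the result ``directly follows from Lemma \ref{lem:formula_defect_freq_single} and is obtained by a straightforward computation from equation \eqref{eq:gen_single_def}'', which is precisely the linear inversion you carry out, with the exclusion of $W$ serving only to make the division legitimate. Your write-up merely makes explicit (and verifies both directions of) what the paper leaves implicit.
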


        \begin{proof}
            This result directly follows from Lemma \ref{lem:formula_defect_freq_single} and is obtained by a straightforward computation from equation \eqref{eq:gen_single_def}.
        \end{proof}
        
        \begin{figure}
            \centering
            \begin{subfigure}[t]{0.49\textwidth}
                \centering
                \includegraphics[width=\textwidth]{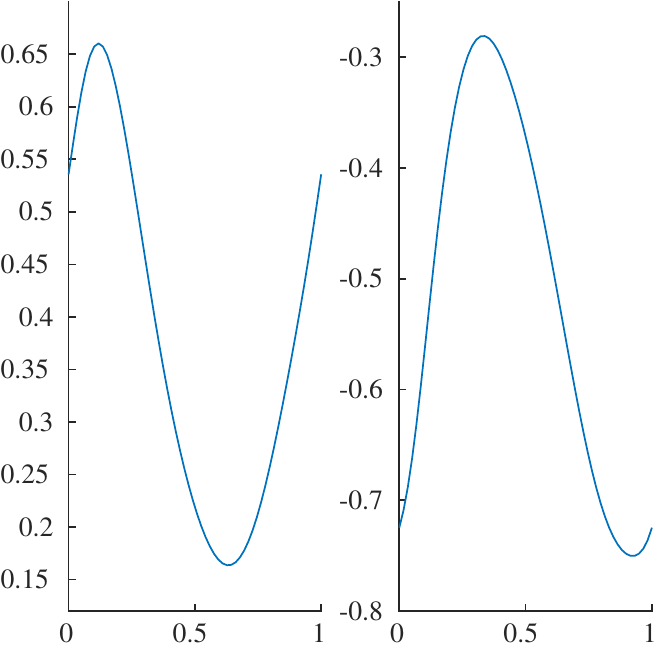}
                \caption{Plot of the map $x \mapsto V^{def}(\omega(x))$, where $\omega(x) = 1 - 0.4i + 0.2\exp(2\pi i x)$. On the left is the real  part of $V^{def}(\omega(x))$ and on the right is the imaginary part.}
            \end{subfigure}
            \hfill
            \begin{subfigure}[t]{0.49\textwidth}
                \centering
                \includegraphics[width=\textwidth]{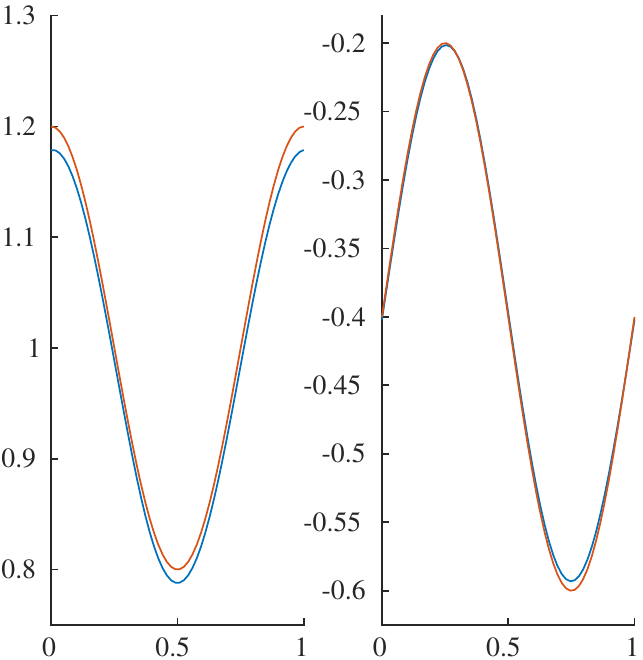}
                \caption{Comparison between $\omega(x) = 1 - 0.4i + 0.2\exp(2\pi i x)$ (blue) and the approximated defect mode eigenfrequency (red) obtained by a finite approximation of the material with defect $V^{def}(\omega(x))$ and $100$ fundamental domains.}
            \end{subfigure}
            \caption{Material design of a defected chain of resonator dimers that admits $\omega$ as a defect mode eigenfrequency. With the notation from Theorem \ref{thm:formula_defect_freq}, the following material parameters are chosen as follows: $V_1 = 1+0.6i, V_2 = 1.0-0.6i$, the radius of all the spherical resonators is given by $R = 0.15$ and the locations of the centers of the two resonators $D_1, D_2$ in the fundamental domain $[0,1) \times \mathbb{R}^2$ are given by $(0.25,0,0)$ and $(0.75,0,0)$, respectively. The considered defect mode eigenfrequencies are given by $\{ \omega =  1 - 0.4i + 0.2\exp(2\pi i x) : x \in [0,1) \}$. In this figure, the $x$-axis of all the plots is parameterized by $x \in [0,1)$. }
            \label{fig:material_design_single_defect}
        \end{figure}
        
        An illustration of the result stated in Theorem \ref{thm:formula_defect_freq} can be found in Figure \ref{fig:material_design_single_defect}. There, the map $\omega \mapsto V^{def}(\omega)$ is plotted and the resulting values $V^{def}(\omega)$ are validated by  a comparison between the desired defect mode eigenfrequency $\omega$ and the defect mode eigenfrequency $\omega_{approx}$ obtained when one approximates the defected material with a truncated, finite material.

        Similarly, given two frequencies $\omega_1$ and $\omega_2$, we succeed to produce defect parameters $V^{def}_1, V^{def}_2$ of a double defect, such that $\omega_1$ and $\omega_2$ are the eigenfrequencies of two localized modes $u_1, u_2$. The precise result is stated in the following theorem.

        \begin{theorem}\label{thm:formula_double_defect_freq}
            Let $D_1, D_2 \subset [0,1) \times \mathbb{R}^2$ be a pair of disjoint resonators which are repeated periodically with respect to $\Lambda = (1,0,0)^T\mathbb{Z}$. Set the material parameters as
                \begin{align*}
                    \delta_1(v_1)^2 =: V_1 \in \mathbb{C},\\
                    \delta_2(v_2)^2 =: V_2 \in \mathbb{C},
                \end{align*}
            and denote by $Y^* := \mathbb{R}/2\pi\mathbb{Z} \times \{0\}^2$ the associated Brillouin zone and by $\mathcal{C^\alpha}$ the associated quasiperiodic generalized capacitance matrix. Then, the following map $\Psi: \left(\mathbb{C}\setminus \bigcup_{\alpha \in Y^*}\sigma(\mathcal{C}^\alpha)\right)^2\setminus W  \rightarrow \mathbb{C}^2$ associates to each pair of frequencies $(\omega_1,\omega_2)$ a choice of defect material parameters $(V_1^{def}(\omega_1,\omega_2),V_2^{def}(\omega_1,\omega_2))$ such that $\omega_1$ and $\omega_2$ occur as defect mode eigenfrequencies, when in the above material a defected fundamental domain is created with the defect material parameters given by $V_1^{def}(\omega_1,\omega_2)$ and $V_2^{def}(\omega_1,\omega_2)$ for the two resonators $D^0_1$ and $D^0_2$, respectively. The map $\Psi$ is given by 
            \begin{align*}
            \begin{matrix}
                \Psi:\quad &\left(\mathbb{C}\setminus \bigcup_{\alpha \in Y^*}\sigma(\mathcal{C}^\alpha)\right)^2\setminus W &\longrightarrow &\hspace{30pt}\mathbb{C}^2\\
                \\
                        &(\omega_1,\omega_2) &\longmapsto &\hspace{30pt}(V_1^{def}(\omega_1,\omega_2),V_2^{def}(\omega_1,\omega_2))
            \end{matrix}
            \end{align*}
            where $(V_1^{def}(\omega_1,\omega_2),V_2^{def}(\omega_1,\omega_2))$ and $W$ are given by the following.
            Let $I_{11}(\omega),I_{12}(\omega),I_{21}(\omega),I_{22}(\omega)$ be defined as follows:
            \begin{align*}
                I_{11}(\omega) &= \frac{1}{\abs{Y^*}}\int_{Y^*} \frac{\det\left(\mathcal{C}^\alpha\right)-\omega \mathcal{C}^\alpha_{11}}{\det\left(\mathcal{C}^\alpha - \omega^2\Id\right)}d\alpha,\qquad
                I_{12}(\omega) = \frac{1}{\abs{Y^*}}\int_{Y^*} \frac{-\omega \mathcal{C}^\alpha_{12}}{\det\left(\mathcal{C}^\alpha - \omega^2\Id \right)}d\alpha,\\
                \\
                I_{21}(\omega) &= \frac{1}{\abs{Y^*}}\int_{Y^*} \frac{-\omega \mathcal{C}^\alpha_{21}}{\det\left(\mathcal{C}^\alpha - \omega^2\Id \right)}d\alpha, \qquad
                I_{22}(\omega) = \frac{1}{\abs{Y^*}}\int_{Y^*} \frac{\det\left(\mathcal{C}^\alpha\right)-\omega \mathcal{C}^\alpha_{22}}{\det\left(\mathcal{C}^\alpha - \omega^2\Id \right)}d\alpha, \\
                \\
                & \qquad \qquad \qquad \qquad I(\omega) = \begin{pmatrix}
                    I_{11}(\omega) &I_{12}(\omega)\\
                    I_{21}(\omega) &I_{22}(\omega)
                \end{pmatrix}.
            \end{align*}
            Then, $W$ is given by
            \begin{align*}
                W = \left\{ (\omega_1,\omega_2) \in \left(\mathbb{C}\setminus \bigcup_{\alpha \in Y^*}\sigma(\mathcal{C}^\alpha)\right)^2 : \begin{matrix} I_{11}(\omega_1)\det(I(\omega_2))-I_{11}(\omega_2)\det(I(\omega_1)) = 0\\ \text{ or }\\ (V_1^{def}(\omega_1,\omega_2)-V_1)\det(I(\omega)) + V_1I_{22}(\omega) = 0\end{matrix}
                \right\}
            \end{align*}
            and $V_1^{def}(\omega_1,\omega_2),V_2^{def}(\omega_1,\omega_2)$ are given by
            \begin{align*}
                V_1^{def}(\omega_1,\omega_2) = V_1-\frac{1}{2}p \pm \sqrt{\frac{p^2}{4} - q}, \quad
                V_2^{def}(\omega_1,\omega_2) = V_2\left(1 - \frac{V_1 + I_{11}(\omega)(V_1^{def}-V_1)}{(V_1^{def}-V_1)\det(I(\omega)) + V_1I_{22}(\omega)}\right)
            \end{align*}
            with
            \begin{align*}
                p& = \frac{\det(I(\omega_1))-\det(I(\omega_2)) + I_{22}(\omega_1)I_{11}(\omega_2)-I_{22}(\omega_2)I_{11}(\omega_1)}{I_{11}(\omega_2)\det(I(\omega_1))-I_{11}(\omega_1)\det(I(\omega_2))}, \\
                q &= \frac{I_{22}(\omega_1)-I_{22}(\omega_2)}{I_{11}(\omega_2)\det(I(\omega_1))-I_{11}(\omega_1)\det(I(\omega_2))}.
            \end{align*}
        \end{theorem}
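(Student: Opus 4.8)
The plan is to descend from the characterizing equation of Lemma~\ref{lem:formula_defect_freq_double} and turn it into an elementary elimination problem. Writing $I(\omega)$ for the integral matrix appearing in that lemma, namely $I(\omega)=\frac{1}{\abs{Y^*}}\int_{Y^*}\det(\mathcal{C}^\alpha-\omega^2\Id)^{-1}\bigl[\det(\mathcal{C}^\alpha)\Id-\omega^2\mathcal{C}^\alpha\bigr]\,d\alpha$ (whose entries are the $I_{ij}(\omega)$ of the statement), Lemma~\ref{lem:formula_defect_freq_double} says that $\omega$ is a defect mode eigenfrequency exactly when $\det\!\bigl(V+(V^{def}-V)I(\omega)\bigr)=0$. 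Since $V$ and $V^{def}$ are diagonal, I would abbreviate $a:=V_1^{def}-V_1$ and $b:=V_2^{def}-V_2$, so that the matrix inside the determinant has rows $(V_1+aI_{11},\,aI_{12})$ and $(bI_{21},\,V_2+bI_{22})$. Expanding the $2\times2$ determinant and using $I_{11}I_{22}-I_{12}I_{21}=\det(I)$ collapses the condition into the single scalar equation
\[
V_1V_2 + V_2\,a\,I_{11}(\omega) + V_1\,b\,I_{22}(\omega) + a\,b\,\det(I(\omega)) = 0,
\]
which is bilinear in the unknowns $(a,b)$.

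The next step is to impose this bilinear equation simultaneously at $\omega=\omega_1$ and $\omega=\omega_2$, giving two equations in two unknowns. First I would treat the $\omega_1$-equation as linear in $b$: rewritten as $V_2\bigl(V_1+aI_{11}(\omega_1)\bigr)+b\bigl(a\det(I(\omega_1))+V_1I_{22}(\omega_1)\bigr)=0$, it solves to $b=-V_2\bigl(V_1+aI_{11}(\omega_1)\bigr)/\bigl(a\det(I(\omega_1))+V_1I_{22}(\omega_1)\bigr)$, and adding $V_2$ gives $V_2^{def}=V_2+b$ in exactly the form displayed for $V_2^{def}(\omega_1,\omega_2)$, with $a=V_1^{def}-V_1$. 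This step is legitimate precisely when the denominator $a\det(I(\omega_1))+V_1I_{22}(\omega_1)$ is nonzero, which is the second defining condition of the excluded set $W$.

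Substituting this expression for $b$ into the $\omega_2$-equation and clearing the denominator yields, after cancelling the common factor $V_2$ and expanding, a single polynomial equation in $a$ alone. The bilinear cross-terms combine so that this is a genuine quadratic $A\,a^2+B\,a+C=0$ with leading coefficient
\[
A = I_{11}(\omega_2)\det(I(\omega_1))-I_{11}(\omega_1)\det(I(\omega_2)),
\]
and with $B$ and $C$ the analogous combinations of $I_{11},I_{22},\det(I)$ evaluated at $\omega_1,\omega_2$ (carrying, respectively, one and two factors of $V_1$). Dividing through by the leading coefficient, and normalizing by the appropriate powers of $V_1$ as in the statement, recovers precisely the coefficients $p$ and $q$, and the quadratic formula then produces $V_1^{def}=V_1-\tfrac12 p\pm\sqrt{p^2/4-q}$. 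Observe that $A$ coincides, up to sign, with the first defining expression of $W$, so requiring $(\omega_1,\omega_2)\notin W$ is exactly the condition that keeps this a genuine (nondegenerate) quadratic with a well-defined root.

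The hard part will be purely bookkeeping: one must carry out the expansion in the last step and verify that the $a^2$-coefficient is indeed $A$ above and that the linear and constant coefficients reproduce $p$ and $q$ after normalization. Conceptually there is no obstacle — the two characterizing equations form a bilinear system whose elimination is a single quadratic — and the only genuine subtlety is to track the two ways the construction can break down, namely the vanishing of the quadratic's leading coefficient and the vanishing of the denominator in the $V_2^{def}$ formula, which together constitute $W$ and must be removed from the domain of $\Psi$. I would close by noting the converse: for $(\omega_1,\omega_2)\notin W$ the pair $(V_1^{def},V_2^{def})$ so constructed satisfies both bilinear equations by construction, so by Lemma~\ref{lem:formula_defect_freq_double} both $\omega_1$ and $\omega_2$ are defect mode eigenfrequencies of the resulting double-defected chain, which is exactly the assertion of the theorem.
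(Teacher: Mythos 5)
Your proposal is correct and follows essentially the same route as the paper's own proof: it starts from Lemma \ref{lem:formula_defect_freq_double}, expands the $2\times2$ determinant into the bilinear equation $V_1V_2 + V_2 X_1 I_{11}(\omega) + V_1 X_2 I_{22}(\omega) + X_1X_2\det(I(\omega)) = 0$ in $X_i = V_i^{def}-V_i$, solves for $X_2$ in terms of $X_1$ (giving the $V_2^{def}$ formula), and eliminates $X_2$ between the $\omega_1$- and $\omega_2$-equations to obtain the same quadratic in $X_1$; your substitution of $b$ from the $\omega_1$-equation into the $\omega_2$-equation is exactly the paper's step of equating $X_2(\omega_1)=X_2(\omega_2)$. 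Your treatment is in fact slightly more explicit than the paper's (which ends at ``solved with standard formulas''), since you identify the leading coefficient with the first defining condition of $W$, track the $V_1$-normalization of the coefficients, and state the converse direction.
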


        \begin{proof}
            From Lemma \ref{lem:formula_defect_freq_double}, we know that $\omega$ is a defect mode eigenfrequency if 
                \begin{align*}
                    0 = \det\left(V + (V^{def} - V)\frac{1}{\abs{Y^*}}\int_{Y^*}\frac{1}{\det(\mathcal{C}^\alpha - \omega^2\Id )}\left[
                        \det(\mathcal{C}^\alpha)\Id - \omega^2 \mathcal{C}^\alpha
                    \right]d\alpha\right),
                \end{align*}
            where
                \begin{align*}
                    V := \begin{pmatrix}
                        V_1  & 0\\
                        0 & V_2
                    \end{pmatrix}, \quad V^{def} := \begin{pmatrix}
                        V_1^{def} & 0\\
                        0 & V_2^{def}
                    \end{pmatrix}.
                \end{align*}
            That is, if the following equation holds for $\omega = \omega_1$ and $\omega = \omega_2$,
                \begin{align*}
                    0   &= \det\left(V + (V^{def} - V)I(\omega)\right),\\
                        &= (V_1 + (V^{def}_1 - V_1)I_{11}(\omega))(V_2 + (V^{def}_2 - V_2)I_{22}(\omega)) - (V^{def}_1 - V_1)(V^{def}_2 - V_2)I_{12}(\omega)I_{21}(\omega) \\
                        &= X_1X_2\det(I(\omega)) + X_1V_2I_{11}(\omega) + X_2V_1I_{22}(\omega) + V_1V_2, 
                \end{align*}
            where $X_i = V^{def}_i - V_i$ with $i = 1,2$, then $\omega_1$ and $\omega_2$ are defect mode eigenfrequencies, simultaneously.
            The function $\Psi$ is obtained by deducing from the above equation the parameter $X_2$ as a function of $\omega$ and then equating $X_2(\omega_1)=X_2(\omega_2)$ to obtain an equation for $X_1$ (without $X_2$) from which an expression for $X_1(\omega_1)=X_1(\omega_2)$ can be deduced.
            
            One obtains
                \begin{align*}
                    X_2(\omega) = -\frac{V_2(V_1 + I_{11}(\omega)X_1)}{X_1\det(I(\omega))+V_1I_{22}(\omega)},
                \end{align*}
    and therefore, equating $ X_2(\omega_1)=X_2(\omega_2)$ gives
                \begin{align*}
                    (V_1 + I_{11}(\omega_1)X_1)(X_1\det(I(\omega_2))+V_1I_{22}(\omega_2)) = (V_1 + I_{11}(\omega_2)X_1)(X_1\det(I(\omega_1))+V_1I_{22}(\omega_1)).
                \end{align*}
            This is a quadratic equation in $X_1$ which can be solved with  standard formulas to obtain the result of this theorem.
        \end{proof}

        \begin{figure}[ht]
            \centering
            \begin{subfigure}[t]{0.49\textwidth}
                \centering
                \includegraphics[width=\textwidth]{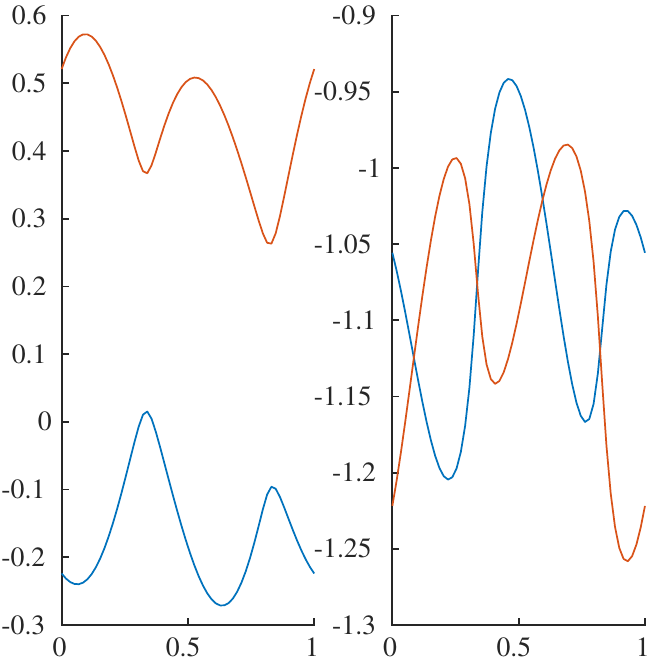}
                \caption{Plot of the map $x \longmapsto\Psi(\omega_1(x),\omega_2(x)) = (V_1^{def}(\omega_1(x),\omega_2(x)),V_2^{def}(\omega_1(x),\omega_2(x)))$, where $\omega_1(x) = 1 - 0.4i + 0.2\exp(2\pi i x)$ and $\omega_2(x) = 1 - 0.4i + 0.2\exp(2\pi i (x+\pi/2)-0.2)$. On the left are the real parts of $V_1^{def}(\omega_1(x),\omega_2(x)),V_2^{def}(\omega_1(x),\omega_2(x))$  in blue and red, respectively and on the right are the imaginary  parts in blue and red, respectively.  }
            \end{subfigure}
            \hfill
            \begin{subfigure}[t]{0.49\textwidth}
                \centering
                \includegraphics[width=\textwidth]{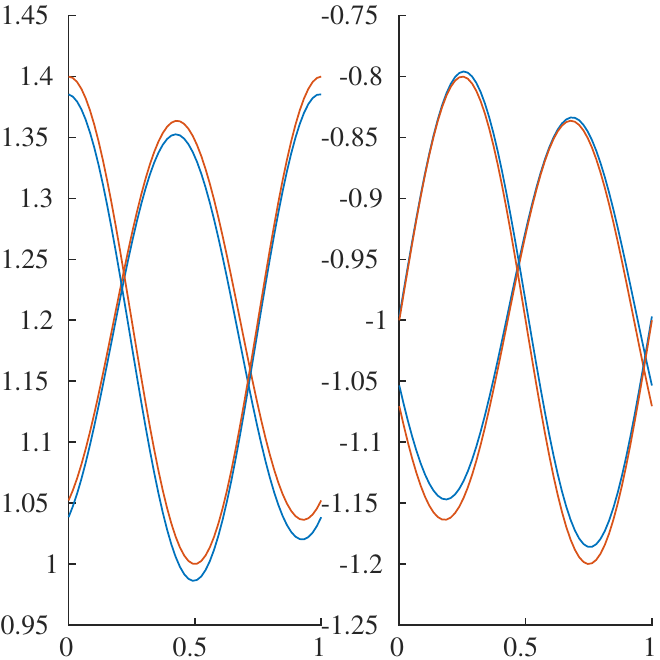}
                \caption{Comparison between $\omega_1(x),\omega_2(x)$ (red) and approximate defect mode eigenfrequencies (blue) associated to a finite approximation of the material with double defect $V_1^{def}(\omega_1(x),\omega_2(x)),V_2^{def}(\omega_1(x),\omega_2(x))$ and with $100$ fundamental domains. \vspace{11pt}}
            \end{subfigure}
            \caption{Material design of a chain of resonator dimers with a double defect that admits  $\omega_1,\omega_2$ as defect mode eigenfrequencies. With the notation from Theorem \ref{thm:formula_double_defect_freq}, the material parameters are chosen as follows: $V_1 = 1+0.6i, V_2 = 1.0-0.6i$, the radius of all the spherical resonators is given by $R = 0.15$ and the locations of the centers of the two resonators $D_1, D_2$ in the fundamental domain $[0,1) \times \mathbb{R}^2$ are given by $(0.25,0,0)$ and $(0.75,0,0)$, respectively. The considered defect mode eigenfrequencies are given by $\{ (\omega_1(x),\omega_2(x)) : x \in [0,1) \}$ with $\omega_1(x) =  1.2 - i + 0.2\exp(2\pi i x)$ and $\omega_2(x) = 1.2 - i + 0.2\exp(2\pi i (x + \pi/2) -0.2)$. In this figure, the $x$-axis of all the plots is parameterized by $x \in [0,1)$. }
            \label{fig:material_design_double_defect}
        \end{figure}

        \begin{remark}
            We would like to remark that the question whether $n$ given frequencies $(\omega_1,\ldots,\omega_n)$ can simultaneously occur as defect mode eigenfrequencies in a periodic structure with $n$ defects and what the defect material parameters would be, can be answered by solving $n$ polynomial equations for a common root. Namely, looking at equation \eqref{eq:general_eq_for_defect}, identities for the corresponding defect material parameters $V^{def}_1,\ldots,V^{def}_n$ are obtained by solving the system
                \begin{align}\label{eq:material_design_multiple_defect}
                    \det(I - X_0T^0(\omega_1)) = 0,~ \ldots,~ \det(I - X_0T^0(\omega_n)) = 0,
                \end{align}
            where $ X_0$ is a diagonal matrix with $(X_0)_{ii} = V^{def}_i - V_i$.\footnote{If the locations of the defects are not in the same fundamental domain of the lattice, then one needs to consider the associated submatrix of $\mathcal{T}(\omega)$ (see equation \eqref{eq:toeplitz_mat}). This particularly occurs when the number of defects is higher than the number of resonators in the fundamental domain. This adjustment in the system of equations \eqref{eq:material_design_multiple_defect} does however not change the polynomial nature of the problem and the reasoning in this remark remains valid.} This system is a set of $n$ polynomial equations in $n$ variables and as such, generically, has a solution in $\mathbb{C}^n$. This implies that $n$ generic frequencies can be realized as defect mode eigenfrequencies in a chain of resonator dimers with $n$ defects. Closed form solutions, as presented in Theorems \ref{thm:formula_defect_freq} and \ref{thm:formula_double_defect_freq}, would be possible for up to 4 frequencies. For more than 4 frequencies, the common roots need to be found numerically.
        \end{remark}

        \begin{remark}
            As in Subsection \ref{subsec:material_double_and_single_defect}, the results in  Subsection \ref{sec:Relation_between_defect_properties_and_localized_frequency} do not rely on the fact that a \emph{chain} of dimers is considered. That is, in the above two Theorems, Theorems \ref{thm:formula_defect_freq} and \ref{thm:formula_double_defect_freq}, the sublattice $\Lambda = (1,0,0)^T\mathbb{Z}$ can be replaced with any $1,2$ or $3$-dimensional sublattice of $\mathbb{R}^3$ without changing the results and the characterizing formulas.
        \end{remark}

    % \pagebreak
\section{Instantly changing materials}\label{sec:Spatio-temporal_localization}

        The aim of this section is to develop a way to design instantly changing materials which admit prescribed \emph{time-dependent} defect mode eigenfrequencies. Furthermore, this will allow the design of materials with spatio-temporally localized modes. To this end, we will first analyze the existence of quasi-harmonic solutions in the presence of instantly changing material parameters. Then, these results are used to design materials with prescribed time-dependent defect mode eigenfrequency. As an application of this material design, materials which admit spatio-temporally localized modes will be treated.

        \begin{figure}[h]
            \centering
            \includegraphics{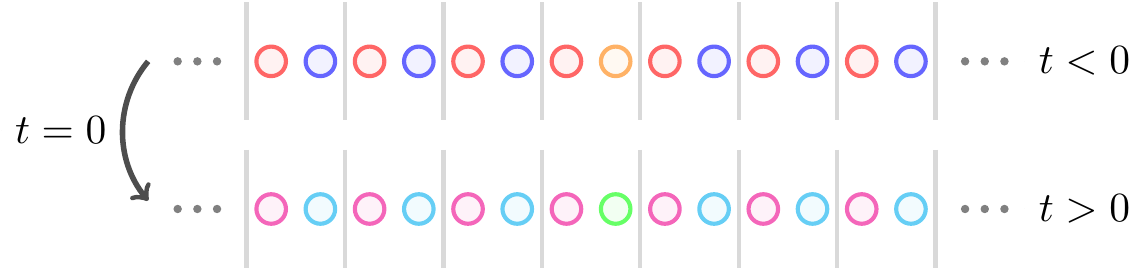}
            \caption{Illustration of the type of time-dependent material used in Section \ref{sec:Spatio-temporal_localization}.}
            \label{fig:def_time_dep_structure}
        \end{figure}

    \subsection{Quasi-harmonic waves in instantly changing materials}\label{subsec:Instantly_changing_material}

        Here, we consider the wave equation with space- and time-dependent materials parameters
            \begin{align}\label{eq:wave_eq}
                \left(\frac{\partial}{\partial t}\frac{1}{\kappa(x,t)}\frac{\partial}{\partial t} - \diver_x \frac{1}{\rho(x,t)}\nabla_x\right)u(x,t) = 0, \quad x \in \mathbb{R}^3, \quad t \in \mathbb{R},
            \end{align}
        with $\rho(x,t)$ and $\kappa(x,t)$ being of the form 
            \begin{align}\label{eq:time_dep_rho_kappa}
                \rho(x,t) = \begin{cases}
                    \rho^-(x) & \text{for } t < 0, \\
                    \rho^+(x) & \text{for } t \geq 0,
                \end{cases}
                \quad \text{ and } \quad
                \kappa(x,t) = \begin{cases}
                    \kappa^-(x) & \text{for } t < 0, \\
                    \kappa^+(x) & \text{for } t \geq 0,
                \end{cases}
            \end{align}
        where $\rho^-(x), \rho^+(x)$ and $\kappa^-(x), \kappa^+(x)$ are $\mathbb{C}^*$-valued functions on $\mathbb{R}^3$.
        In this setting, we are interested in what we call  \emph{quasi-harmonic} solutions.

        \begin{definition}[Quasi-harmonic modes]
            A non-zero solution $u(x,t)$ of the wave-equation \eqref{eq:wave_eq} is called \emph{quasi-harmonic} if it is of the form
                \begin{align}\label{eq:quasi_time_harm_sol}
                    u(x,t) = v(x)\exp(-i\omega(t)t),
                \end{align}
            where $v: \mathbb{R}^3 \rightarrow \mathbb{C}$ is independent of the time variable and $\omega(t)$ has the form
                \begin{align}\label{eq:time-dep_omega}
                    \omega(t) = \begin{cases}
                        \omega^- & \text{for } t < 0,\\
                        \omega^+ & \text{for } t > 0,
                    \end{cases}
                \end{align}
            for $\omega^-, \omega^+ \in \mathbb{C}$. In this case, $\omega(t)$ is called \emph{time-dependent frequency} of $u(x,t)$.
        \end{definition}
        
        We will see that quasi-harmonic solutions only occur if $\omega^+\rho^+ = \omega^-\rho^-$ and $\omega^-\kappa^+ = \omega^+\kappa^-$. That is, quasi-harmonic solutions only occur if $\rho$ and $\kappa$ satisfy a type of \emph{temporal Snell's law}
            $$\frac{\omega^+}{\omega^-} = \frac{\rho^-}{\rho^+} = \frac{\kappa^+}{\kappa^-},$$
        where $\omega$ takes the position of the angle of incidence, $\rho$ the refractive index and $\kappa$ the phase velocity.

        \begin{prop}[Temporal Snell's law]\label{prop:snell's_law}\label{prop:constant_density}
            In the above setting and under the assumption that a non-zero quasi-harmonic solution $u(x,t)$ with $\omega^+, \omega^- \not = 0$ exists, then the following relation is satisfied by $\omega^-, \omega^+$, $\rho^-,\rho^+$ and $\kappa^-,\kappa^+$:
                \begin{equation} \label{snelllaw}
                    \frac{\omega^+}{\omega^-} = \frac{\kappa^+}{\kappa^-} = \frac{\rho^-}{\rho^+}.
                \end{equation}
        \end{prop}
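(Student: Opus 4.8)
The plan is to substitute the quasi-harmonic ansatz $u(x,t)=v(x)e^{-i\omega(t)t}$ into the wave equation \eqref{eq:wave_eq} and to separate the resulting identity into its \emph{bulk} contribution (away from the temporal boundary $t=0$) and its \emph{singular} contribution (concentrated at $t=0$). Writing $\phi(t)=\omega(t)t$, note that $\phi$ is continuous with $\phi(0)=0$ while $\phi'$ jumps from $\omega^-$ to $\omega^+$ across $t=0$; hence $u$ is continuous in time and $\partial_t u=-i\,\phi'(t)\,v(x)e^{-i\phi(t)}$ carries a jump but no Dirac mass at $t=0$.

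For $t\neq 0$ the coefficients are time-independent, so dividing by $e^{-i\phi(t)}$ yields the two Helmholtz-type identities
\[
\diver\Big(\tfrac{1}{\rho^-}\nabla v\Big)+\tfrac{(\omega^-)^2}{\kappa^-}v=0\quad(t<0),\qquad
\diver\Big(\tfrac{1}{\rho^+}\nabla v\Big)+\tfrac{(\omega^+)^2}{\kappa^+}v=0\quad(t>0),
\]
for one and the same spatial profile $v$. The singular part comes from $\partial_t\big(\tfrac{1}{\kappa}\partial_t u\big)$: since $\tfrac{1}{\kappa}\partial_t u$ jumps across $t=0$, its time derivative produces a term $(-i)\big(\tfrac{\omega^+}{\kappa^+}-\tfrac{\omega^-}{\kappa^-}\big)\,v(x)\,\delta(t)$, whereas $\diver(\tfrac{1}{\rho}\nabla u)$ stays bounded in $t$. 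Equivalently, integrating \eqref{eq:wave_eq} over $t\in(-\epsilon,\epsilon)$ and letting $\epsilon\to0$ forces continuity of the momentum $\tfrac{1}{\kappa}\partial_t u$ across the time boundary. As $v\not\equiv 0$, this gives, wherever $v\neq 0$,
\[
\frac{\omega^-}{\kappa^-}=\frac{\omega^+}{\kappa^+},\qquad\text{that is}\qquad \frac{\kappa^+}{\kappa^-}=\frac{\omega^+}{\omega^-},
\]
which is the first half of \eqref{snelllaw}.

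To recover the density relation I would feed this back into the bulk equations. Using $\tfrac{(\omega^+)^2}{\kappa^+}=\omega^+\cdot\tfrac{\omega^-}{\kappa^-}$ from the previous step, the $t>0$ equation reads $\diver(\tfrac{1}{\rho^+}\nabla v)=-\tfrac{\omega^+\omega^-}{\kappa^-}v$, while $\tfrac{\omega^+}{\omega^-}$ times the $t<0$ equation reads $\tfrac{\omega^+}{\omega^-}\diver(\tfrac{1}{\rho^-}\nabla v)=-\tfrac{\omega^+\omega^-}{\kappa^-}v$. Subtracting gives
\[
\diver\!\left[\Big(\tfrac{1}{\rho^+}-\tfrac{\omega^+}{\omega^-}\tfrac{1}{\rho^-}\Big)\nabla v\right]=0.
\]
In the piecewise-constant resonator geometry the bracketed coefficient $g$ is constant on each region, and there $\diver(g\,\nabla v)=g\,\Delta v$ with $\Delta v=-c\,v$, $c\neq0$; since $v\neq0$ on the region, the constant $g$ must vanish, giving $\tfrac{1}{\rho^+}=\tfrac{\omega^+}{\omega^-}\tfrac{1}{\rho^-}$, i.e. $\tfrac{\rho^-}{\rho^+}=\tfrac{\omega^+}{\omega^-}$. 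Combining the two ratios yields \eqref{snelllaw}.

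The step I expect to be the main obstacle is the rigorous justification of the temporal interface condition --- that a genuine (distributional or weak) solution of \eqref{eq:wave_eq} must have $u$ and $\tfrac{1}{\kappa}\partial_t u$ continuous across $t=0$, rather than merely formally cancelling the Dirac masses --- together with the passage from $\diver[g\,\nabla v]=0$ to $g\equiv0$, which relies on $v$ being a nontrivial mode ($\Delta v=-c\,v\not\equiv0$) on each region where $v\neq0$. Both are clean in the piecewise-constant setting but require care if $\rho^\pm,\kappa^\pm$ are treated as general functions, in which case the conclusions should be read as holding pointwise on $\{v\neq0\}$.
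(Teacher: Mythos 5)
Your proof is correct and follows essentially the same route as the paper's: first, continuity of the momentum $\tfrac{1}{\kappa}\partial_t u$ across $t=0$ (which the paper asserts tersely and you justify via the absence of a Dirac mass) yields $\omega^-/\kappa^- = \omega^+/\kappa^+$, and second, comparing the two spatial Helmholtz equations satisfied by the same profile $v$ yields $\rho^-\omega^- = \rho^+\omega^+$. Your version is in fact slightly more careful than the paper's, which omits the Laplacian in its displayed bulk equations and does not spell out why the resulting coefficient identity forces the density relation (your observation that $\Delta v = -c\,v \not\equiv 0$ on regions where $v\neq 0$).
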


        \begin{remark}
            If $\omega^+ \not=0$ and $\omega^-\not=0$, then Snell's law in particular implies that ${\kappa^+}(x) = \frac{\omega^+}{\omega^-}\kappa^-(x)$ and ${\rho^+}(x) = \frac{\omega^-}{\omega^+}\rho^+(x)$ for all $x \in \mathbb{R}^3$, where $\frac{\omega^-}{\omega^+}$ is constant in space.
        \end{remark}

        \begin{proof}
            Assume $u(x,t) = v(x)\exp(-i\omega(t))$ is a non-zero quasi-harmonic solution with $\omega^-,\omega^+\not=0$.
            Since the time-derivative is continuous, it follows
                \begin{align*}
                    \lim_{t \nearrow 0} \frac{1}{\kappa(x,t)}\frac{\partial}{\partial t}u(x,t) = \lim_{t \searrow 0} \frac{1}{\kappa(x,t)}\frac{\partial}{\partial t}u(x,t),
                \end{align*}
            which in this case is simply given by
                \begin{align*}
                    \frac{\omega^-}{\kappa^-} = \frac{\omega^+}{\kappa^+}.
                \end{align*}

            Then, putting the ansatz \eqref{eq:quasi_time_harm_sol} into equation \eqref{eq:wave_eq} and considering the associated Helmholtz problem one obtains that, for $t<0$, 
            $$ \frac{(\omega^-)^2}{\kappa^-}v(x) + \frac{1}{\rho^-}v(x) = 0$$
            and for $t\geq0$
            $$ \frac{(\omega^+)^2}{\kappa^+}v(x) + \frac{1}{\rho^+}v(x) = 0.$$
            Since $\frac{\omega^-}{\kappa^-} = \frac{\omega^+}{\kappa^+}$ by the first equality, it thus follows that $\rho^+\omega^+ = \rho^-\omega^-$.
        \end{proof}

        Since the material parameters $\rho$ and $\kappa$ are given as in equation \eqref{eq:time_dep_rho_kappa} and since we are interested in quasi-harmonic solutions, it is possible to solve the wave equation \eqref{eq:wave_eq} by solving the following associated Helmholtz problems for negative and positive times $t$:
        \begin{align*}
                \Delta v + \omega^2\frac{\kappa^-(x)}{\rho^-(x)} v = 0 & \text{ for } v: \mathbb{R}^3 \rightarrow \mathbb{C} \text{ and } \omega \in \mathbb{C},\\ \text{ and }\\
                \Delta v + \omega^2\frac{\kappa^+(x)}{\rho^+(x)} v = 0 & \text{ for } v: \mathbb{R}^3 \rightarrow \mathbb{C} \text{ and } \omega \in \mathbb{C},
        \end{align*}
        and then using a solution $(v(x),\omega)$ for negative or positive times as an initial condition for positive or negative times, respectively. This approach leads to the following theorem.

        \begin{theorem}\label{thm:time_dependent}
            Consider the above setting and assume that a non-zero solution $v:\mathbb{R}^3\rightarrow \mathbb{C}$ of the Helmholtz problem
                \begin{align*}
                        \Delta v + (\omega^-)^2\frac{\kappa^-(x)}{\rho^-(x)} v = 0 & \text{ in } \mathbb{R}^3,
                \end{align*}
            exists. Then, $v(x)\exp(-i\omega^-t): \mathbb{R}^3\times\mathbb{R}_{<0} \rightarrow \mathbb{C}$ corresponds to the restriction of a quasi-harmonic solution $u(x,t)$ of the wave equation \eqref{eq:wave_eq} if and only if $\rho$ and $\kappa$ satisfy the temporal Snell law \eqref{snelllaw}.
        \end{theorem}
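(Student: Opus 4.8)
Because the statement is an equivalence, the two implications have very different characters: necessity is essentially already contained in Proposition~\ref{prop:snell's_law}, whereas sufficiency requires us to build the extension across the temporal interface and to check that it is a bona fide solution. The plan is therefore to first fix the meaning of ``solution'' of \eqref{eq:wave_eq} when $\kappa$ and $\rho$ jump at $t=0$, and then dispatch the two directions.

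First I would isolate the interface conditions. On each open slab $\{t<0\}$ and $\{t>0\}$ the coefficients are constant in time, so there \eqref{eq:wave_eq} is an ordinary (time-)constant-coefficient wave equation and the only issue is the matching at $t=0$. Keeping the temporal part in divergence form $\partial_t(\kappa^{-1}\partial_t u)$, I would argue that a distributional solution on all of $\mathbb{R}^3\times\mathbb{R}$ is forced to satisfy two \emph{temporal transmission conditions}: (T1) $u$ is continuous across $t=0$, and (T2) $\kappa^{-1}\partial_t u$ is continuous across $t=0$. Indeed, a jump of $u$ at $t=0$ would place a Dirac mass in $\partial_t u$ and hence a $\delta'(t)$ in $\partial_t(\kappa^{-1}\partial_t u)$, which the spatial term $\diver(\rho^{-1}\nabla u)$ carries no singularity to cancel; likewise a jump of $\kappa^{-1}\partial_t u$ leaves an unbalanced $\delta(t)$. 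Conversely, (T1)--(T2) together with the classical equation on each slab are exactly what makes the weak pairing against test functions vanish. I expect this step---making the $\delta$/$\delta'$ cancellation rigorous, equivalently writing the correct weak formulation of the temporal-divergence operator---to be the main obstacle; everything afterwards is algebra already implicit in Proposition~\ref{prop:snell's_law}.

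For necessity, if $v(x)e^{-i\omega^- t}|_{t<0}$ is the restriction of a quasi-harmonic solution $u\neq 0$ in the sense of \eqref{eq:quasi_time_harm_sol}--\eqref{eq:time-dep_omega}, then evaluating (T2) at $t=0$ yields $\omega^-/\kappa^-=\omega^+/\kappa^+$, so that $\omega^-\neq 0$ forces $\omega^+\neq 0$; Proposition~\ref{prop:snell's_law} then applies verbatim and gives the temporal Snell law \eqref{snelllaw}.

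For sufficiency, assume \eqref{snelllaw} and set $u(x,t)=v(x)e^{-i\omega^- t}$ for $t<0$ and $u(x,t)=v(x)e^{-i\omega^+ t}$ for $t\geq 0$, with the same spatial profile on both sides as the quasi-harmonic ansatz demands. Then (T1) holds because both one-sided limits equal $v(x)$, and (T2) reduces precisely to the first identity $\omega^-/\kappa^-=\omega^+/\kappa^+$ of \eqref{snelllaw}. It remains to verify that $u$ solves \eqref{eq:wave_eq} on $\{t>0\}$; substituting the ansatz as in \eqref{eq:Helmholtz_prob}, this is equivalent to $v$ solving the Helmholtz equation with coefficient $(\omega^+)^2\rho^+/\kappa^+$. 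The key algebraic point is that \eqref{snelllaw} renders this coefficient invariant across the time boundary: writing $r=\omega^+/\omega^-=\kappa^+/\kappa^-=\rho^-/\rho^+$, one has $(\omega^+)^2\rho^+/\kappa^+ = r^2\,r^{-1}\,r^{-1}\,(\omega^-)^2\rho^-/\kappa^- = (\omega^-)^2\rho^-/\kappa^-$, so the hypothesis that $v$ solves the $t<0$ Helmholtz problem transfers unchanged to $t>0$. Hence $u$ solves \eqref{eq:wave_eq} classically on each slab and satisfies (T1)--(T2), that is, it is a quasi-harmonic solution restricting to the prescribed data, which closes the equivalence.
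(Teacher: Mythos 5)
Your proof is correct and takes essentially the same route as the paper's: necessity is delegated to Proposition~\ref{prop:snell's_law} exactly as in the paper, and sufficiency is the direct piecewise construction that the paper asserts. The only difference is one of completeness --- the paper's proof is three sentences long and simply states that under the Snell law \eqref{snelllaw} the function $u(x,t)=v(x)\exp(-i\omega(t)t)$ solves \eqref{eq:wave_eq}, whereas you supply the two ingredients behind that assertion, namely the temporal transmission conditions at $t=0$ and the invariance of the Helmholtz coefficient $\omega^2\rho/\kappa$ under the Snell scaling (your convention for this coefficient is the correct one, consistent with \eqref{eq:Helmholtz_prob}; the $\kappa^-/\rho^-$ appearing in the theorem statement is a typo).
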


        \begin{remark}
            Equivalently, given a solution $(v(x),\omega^+)$ to the a Helmholtz problem
                \begin{align*}
                        \Delta v + (\omega^+)^2\frac{\kappa^+(x)}{\rho^+(x)} v = 0 & \text{ in } \mathbb{R}^3.
                \end{align*}
                Then, $v(x)\exp(-i\omega^+t): \mathbb{R}^3\times\mathbb{R}_{\geq0} \rightarrow \mathbb{C}$ corresponds to the restriction of a quasi-harmonic solution $u(x,t)$ of the wave equation \eqref{eq:wave_eq} if and only if $\rho$ and $\kappa$ satisfy the temporal Snell law \eqref{snelllaw}.
        \end{remark}
        
        \begin{proof}
            Assume that $v(x)\exp(-i\omega^-t): \mathbb{R}^3\times\mathbb{R}_{<0} \rightarrow \mathbb{C}$ is the restriction of a mode of the form $u(x,t) = v(x)\exp(-i\omega(t)t)$, where $\omega(t) = \omega^-$ for $t<0$ and $\omega(t) = \omega^+$ for $t\geq0$. Then  Propositions \ref{prop:snell's_law} implies that $\rho$ and $\kappa$ satisfy the temporal Snell law. Conversely, if $\rho$ and $\kappa$ are as above and satisfy the temporal Snell's law, then $u(x,t) = v(x)\exp(-i\omega(t)t)$ defines a solution to \eqref{eq:wave_eq} and its restriction to the negative time domain is given by $v(x)\exp(-i\omega^-t): \mathbb{R}^3\times\mathbb{R}_{<0} \rightarrow \mathbb{C}$.
        \end{proof}

        These general results allow to develop a material design theory for subwavelength quasi-harmonic solutions with prescribed time-dependent defect mode eigenfrequency in the next section.

    \subsection{Design of instantly changing defected dimer systems with prescribed time-dependent defect mode eigenfrequencies}\label{subsec:temporal_material}

        As in Subsection \ref{sec:Theoretical_background_and_setting}, we will consider disjoint resonators $D_1, D_2 \subset [0,1) \times \mathbb{R}^2$ repeated periodically with respect to the lattice $\Lambda = (1,0,0)^T\mathbb{Z}$ in $\mathbb{R}^3$, forming a chain of resonator dimers. The resonator domain is then given by
            $$\mathcal{D} := \bigcup_{m \in \mathbb{Z}} D^m_1 \cup D^m_2, \quad \text{ where }\quad D_i^m := \begin{pmatrix}
                m \\ 0 \\ 0
            \end{pmatrix} + D_i.$$

        Similarly as before, we associate to every resonator $D_i^m$ with $i=1,2$ and $m \in \mathbb{Z}$  the time-dependent material parameters $\rho_i^m(t)$ and $\kappa_i^m(t)$, and to the background medium the time-dependent material parameters $\rho_b(t)$ and $\kappa_b(t)$.
        Moreover, we suppose that the resonator material parameters are periodic with respect to the lattice $\Lambda$, except for the resonator $D^0_1$, which might have different material parameters. That is, we assume that
            \begin{align*}
                \kappa_i^m(t) = \kappa_i^n(t), \quad
                \rho_i^m(t) = \rho_i^n(t),
            \end{align*}
         for $m,n \in \mathbb{Z}$ and $(m,i),(n,i) \not=(0,1)$.
        In order to stay in the scope of Section \ref{subsec:Instantly_changing_material}, we assume that $\kappa_i^m(t), \kappa_b(t), \rho_i^m(t)$ and $\rho_b(t)$ are constant for negative times and constant for positive times, but may have a jump at time $t=0$. We will call time-dependent materials of this type \emph{instantly changing materials}. We will adopt the following notation:
            \begin{align*}
                \kappa(x,t) = \begin{cases}
                    \kappa_i^m(t) &\text{for } x\in D_i^m \text{ for some } m\in \mathbb{Z}, i = 1,2,\\
                    \kappa_b(t) &\text{for } x \not\in \mathcal{D},
                \end{cases} \\
                \rho(x,t) = \begin{cases}
                    \rho_i^m(t) &\text{for } x\in D_i^m \text{ for some } m\in \mathbb{Z}, i = 1,2,\\
                    \rho_b(t) &\text{for } x \not\in \mathcal{D}.
                \end{cases}
            \end{align*}

        The time dependent contrast parameter associated to the resonator $D_i^m$ will be denoted by $\delta_i^m(t):=\rho_i^m(t)/\rho_b(t)$ and which we will assume satisfies for some general parameter $\delta$
            \begin{equation}
             \delta_i^m = O(\delta) \text{ as } \delta  \rightarrow 0,\end{equation}
        uniformly in $t \in \mathbb{R},~ m \in \mathbb{Z},~ i =1,2$.
        In the setting of  instantly changing high-contrast materials, the following definition will be useful.
        
        \begin{definition}[Subwavelength quasi-harmonic solution]
            We will call a quasi-harmonic solution $u(x,t)= v(x)\exp(-i \omega(t)t)$ of the time dependent wave equation \eqref{eq:wave_eq} a \emph{subwavelength quasi-harmonic solution} if $(v(x),\omega^-)$ and $(v(x),\omega^+)$ are subwavelength solutions to the associated Helmholtz problems. 
            If furthermore $v(x)$ is a localized or defect mode, we call $\omega(t)$ the \emph{time-dependent defect mode eigenfrequency} of $v(x)$.
        \end{definition}

        \begin{lemma}\label{lem:time-dep_subwavelength}
            Let $\kappa(x,t)$ and $\rho(x,t)$ be defined as above and consider the wave equation \eqref{eq:wave_eq}. If the wave equation \eqref{eq:wave_eq} admits subwavelength solutions for negative times and if
                $$b~\kappa(x,t)\vert_{t<0} = \kappa(x,t)\vert_{t\geq0},$$
                $$\frac{1}{b}~\rho(x,t)\vert_{t<0} = \rho(x,t)\vert_{t\geq0},$$
            for some constant $b \in \mathbb{C}^*$ which is independent of $\delta$, then the wave equation \eqref{eq:wave_eq} admits quasi-harmonic subwavelength solutions.
        \end{lemma}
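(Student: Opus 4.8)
The plan is to reduce the lemma to a single application of Theorem \ref{thm:time_dependent}, the only genuine content being the bookkeeping showing that the two scaling hypotheses force the temporal Snell's law and preserve the subwavelength nature across the time boundary $t=0$. First I would fix the given negative-time subwavelength solution and record it as a family $(\omega^-_\delta, v_\delta)$ depending continuously on $\delta$, with $v_\delta \not= 0$ and $\omega^-_\delta \to 0$ as $\delta \to 0$, each pair solving the negative-time Helmholtz problem with the transmission conditions of \eqref{eq:Helmholtz_prob}.

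Next I would define the candidate positive-time frequency $\omega^+_\delta := b\,\omega^-_\delta$. The hypotheses $b\,\kappa^- = \kappa^+$ and $b^{-1}\rho^- = \rho^+$, holding for all $x$, give $\kappa^+/\kappa^- = b = \rho^-/\rho^+$, so with this choice $\omega^+_\delta/\omega^-_\delta = \kappa^+/\kappa^- = \rho^-/\rho^+$, which is exactly the temporal Snell's law \eqref{snelllaw}. Theorem \ref{thm:time_dependent} then applies verbatim: the restriction $v_\delta(x)\exp(-i\omega^-_\delta t)$ extends to a quasi-harmonic solution $u_\delta(x,t)=v_\delta(x)\exp(-i\omega_\delta(t)t)$ of \eqref{eq:wave_eq}, with $\omega_\delta(t)=\omega^-_\delta$ for $t<0$ and $\omega_\delta(t)=\omega^+_\delta$ for $t\geq 0$.

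It then remains to verify that $u_\delta$ is \emph{subwavelength}, i.e. that $(v_\delta,\omega^+_\delta)$ is itself a subwavelength solution of the positive-time Helmholtz problem. The key computation is that the coefficient $\omega^2\rho/\kappa$ appearing in \eqref{eq:Helmholtz_prob} (written through the wave speeds $v_i^m=\sqrt{\kappa_i^m/\rho_i^m}$, so that $\omega^2/(v_i^m)^2=\omega^2\rho_i^m/\kappa_i^m$) is invariant across the time boundary: with $\omega^+=b\omega^-$, $\kappa^+=b\kappa^-$, $\rho^+=b^{-1}\rho^-$ one finds $(\omega^+)^2\rho^+/\kappa^+ = b^2(\omega^-)^2\cdot b^{-2}\,\rho^-/\kappa^- = (\omega^-)^2\rho^-/\kappa^-$. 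Moreover, since $\rho$ is rescaled by the same factor $b^{-1}$ in the resonators and in the background, the contrast parameters $\delta^m_i=\rho^m_i/\rho_b$ are unchanged at $t=0$, so the transmission conditions in \eqref{eq:Helmholtz_prob} are identical for positive and negative times. Consequently the positive-time boundary value problem coincides with the negative-time one, and the very same $v_\delta$ solves it.

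The only delicate point, and the sole place where the hypothesis that $b$ is independent of $\delta$ enters, is the requirement $\omega^+_\delta \to 0$ as $\delta \to 0$ in the definition of a subwavelength solution. Because $\omega^+_\delta = b\,\omega^-_\delta$ with $b$ a fixed nonzero constant, this decay is immediate from $\omega^-_\delta \to 0$, and continuous dependence on $\delta$ is inherited from that of $\omega^-_\delta$; had $b$ been allowed to depend on $\delta$ and blow up, this step would fail. Thus both $(v_\delta,\omega^-_\delta)$ and $(v_\delta,\omega^+_\delta)$ are subwavelength solutions of their respective Helmholtz problems, so by definition $u_\delta$ is a subwavelength quasi-harmonic solution, which is the assertion of the lemma.
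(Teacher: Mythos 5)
Your proof is correct and takes essentially the same route as the paper's: reduce everything to Theorem \ref{thm:time_dependent} by choosing $\omega^+ = b\,\omega^-$, which makes the temporal Snell's law hold by the two scaling hypotheses. The paper's own proof is much terser (it simply cites the theorem and states the choice of $\omega^+$), whereas you additionally spell out the invariance of the Helmholtz coefficient $\omega^2\rho/\kappa$, the preservation of the contrast parameters $\delta_i^m$, and the role of $b$ being independent of $\delta$ in securing $\omega^+_\delta \to 0$ --- details the paper leaves implicit.
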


        \begin{proof}
            The proof is a direct consequence of Theorem \ref{thm:time_dependent}. Let $(v(x),\omega^-)$ be a subwavelength solution to the Helmholtz problem for negative times. Then $$ u(x,t) = \begin{cases}
                v(x)\exp(-i\omega^- t) &\text{for } t<0,\\
                v(x)\exp(-i\omega^+ t) &\text{for } t\geq0,\\
            \end{cases}$$ is a quasi-harmonic subwavelength solution of \eqref{eq:wave_eq} if and only if $\omega^+ = b\omega^-$.
        \end{proof}
        
        Having clarified the behavior of subwavelength quasi-harmonic solutions to the wave equation of an instantly changing material, we will present a way to design metamaterials which admit quasi-harmonic subwavelength solutions of prescribed time-dependent defect mode eigenfrequency. This result then lays the basis for the construction of instantly changing materials which admit spatio-temporally localized quasi-harmonic subwavelength solutions.

        \begin{theorem}\label{thm:time_dep_material_design}
            Let $\mathcal{D}$ and $D_i^m$ be defined as at the beginning of this section and use the same notation as in Theorem \ref{thm:formula_defect_freq} which will describe the material for $t<0$. Then there exists a map $\Phi$ which associates to a pair of frequencies $(\omega^-,\omega^+)$ the defect material parameter $V^{def}_1$ and the constant $b$ from Lemma \ref{lem:time-dep_subwavelength}, such that $\omega(t) = \chi_{t<0}\omega^-+\chi_{t\geq0}\omega^+$ is the time-dependent defect mode eigenfrequency of a quasi-harmonic subwavelength solution $u(x,t)= v(x)\exp(-i \omega(t)t)$ of the associated instantly changing material. More precisely, let $V_1$, $V_2$ be as in Theorem \ref{thm:formula_defect_freq}
                \begin{align*}
                    \begin{matrix}
                        \Phi: &\mathbb{C}\setminus \left(\bigcup_{\alpha\in Y^*}\sigma(\mathcal{C}^\alpha)\cup W\right) \times \mathbb{C}^* &\longrightarrow &\mathbb{C}\times\mathbb{C}\\
                        \\
                        &(\omega^-,\omega^+) &\longmapsto &(V^{def}_1(\omega^-),b),
                    \end{matrix}
                \end{align*}
            where $V^{def}_1(\omega^-) = \Psi(\omega^-)$ as in Theorem \ref{thm:formula_defect_freq} and $b = \omega^+/\omega^-$. If we let $V_1,V_2,V_1^{def}$ be the material parameters for $t<0$ and  $b^2 V_1,b^2 V_2,b^2 V_1^{def}$ be the material parameters for $t \geq 0$, then this system admits a quasi-harmonic subwavelength solution $u(x,t)$ of the form 
                \begin{align*}
                    u(x,t) = v(x)\exp(-i \omega(t)t) \text{ with } \omega(t) = \begin{cases}
                        \omega^- &\text{if } t < 0,\\
                        \omega^+ &\text{if } t \geq 0.
                    \end{cases}
                \end{align*}
        \end{theorem}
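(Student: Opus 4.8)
The plan is to obtain Theorem \ref{thm:time_dep_material_design} as the combination of the static design result, Theorem \ref{thm:formula_defect_freq}, with the time-transition criterion of Lemma \ref{lem:time-dep_subwavelength}. Given an admissible pair $(\omega^-,\omega^+)$, I set $V_1^{def}:=\Psi(\omega^-)$ and $b:=\omega^+/\omega^-$, let the material for $t<0$ carry the capacitance parameters $V_1,V_2,V_1^{def}$, and let the material for $t\geq 0$ carry $b^2V_1,b^2V_2,b^2V_1^{def}$. I must then exhibit a single spatial profile $v$ and verify that $u(x,t)=v(x)\exp(-i\omega(t)t)$ is a subwavelength quasi-harmonic solution whose $v$ is a defect mode on both time intervals.

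First I would dispatch the negative-time problem. By Theorem \ref{thm:formula_defect_freq}, the choice $V_1^{def}=\Psi(\omega^-)$ is exactly the defect parameter for which the static chain with background parameters $V_1,V_2$ admits $\omega^-$ as a defect mode eigenfrequency. This produces a subwavelength defect mode $v$ solving the associated Helmholtz problem, with $v$ spatially localized and with $(\omega^-)^2$ lying outside $\bigcup_{\alpha}\sigma(\mathcal{C}^\alpha)$.

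Next I would set up the temporal transition. Writing the capacitance parameter as $V_i=\delta_i v_i^2=\kappa_i/\rho_b$, the prescribed rescaling $V_i\mapsto b^2V_i$ (and likewise for the defect and the background) is realized precisely by $\kappa\mapsto b\kappa$ and $\rho\mapsto b^{-1}\rho$ across $t=0$, which is the hypothesis $b\,\kappa|_{t<0}=\kappa|_{t\geq 0}$, $b^{-1}\rho|_{t<0}=\rho|_{t\geq 0}$ of Lemma \ref{lem:time-dep_subwavelength} with $b$ constant in $\delta$; equivalently, the temporal Snell law \eqref{snelllaw} holds. As the computation in the proof of Proposition \ref{prop:snell's_law} shows, this law is exactly the statement that the bulk Helmholtz coefficient $\omega^2\rho/\kappa$ is preserved across the time boundary, since $(\omega^+/\omega^-)^2=(\kappa^+/\kappa^-)(\rho^-/\rho^+)$. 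Consequently the single profile $v$ from the negative-time step also solves the Helmholtz problem for $t\geq 0$, and $(v,\omega^+)$ is again subwavelength because $\omega^+=b\omega^-\to 0$ as $\delta\to 0$ with $b$ fixed. Applying Lemma \ref{lem:time-dep_subwavelength} then furnishes the quasi-harmonic subwavelength solution $u(x,t)=v(x)\exp(-i\omega(t)t)$ with $\omega^+=b\omega^-$.

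Finally I would confirm that $v$ remains a defect mode after the transition, so that $\omega(t)$ qualifies as a time-dependent defect mode eigenfrequency. Spatial localization is immediate, since $v$ is unchanged. For the out-of-band condition, the generalized capacitance matrix scales as $\mathcal{C}^{\alpha}\mapsto b^2\mathcal{C}^{\alpha}$, so the continuous spectrum $\bigcup_{\alpha}\sigma(\mathcal{C}^\alpha)$ is multiplied by $b^2$; because $(\omega^-)^2$ avoids the unscaled spectrum, $(\omega^+)^2=b^2(\omega^-)^2$ avoids the scaled spectrum $\bigcup_{\alpha}\sigma(b^2\mathcal{C}^\alpha)$, so $\omega^+$ is also a defect frequency. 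The main obstacle is not a single hard estimate but this bookkeeping: one must check that the one constant $b$ simultaneously reconciles the capacitance normalization $V_i\mapsto b^2V_i$, the temporal Snell law at the interface, and the resulting scale-invariance of the Helmholtz operator, so that a single localized profile $v$ serves on both time intervals while its spectral avoidance property is carried along the $b^2$-scaling.
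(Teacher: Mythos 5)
Your proposal is correct and follows essentially the same route as the paper: apply Theorem \ref{thm:formula_defect_freq} to set $V_1^{def}=\Psi(\omega^-)$, then invoke the temporal Snell law (Theorem \ref{thm:time_dependent}, via Lemma \ref{lem:time-dep_subwavelength}) to conclude that scaling all material parameters by $b^2$ with $b=\omega^+/\omega^-$ lets the same spatial profile $v$ continue as a quasi-harmonic subwavelength solution across $t=0$. Your additional verification that the defect-mode property persists for $t\geq 0$ --- via the scaling $\mathcal{C}^\alpha\mapsto b^2\mathcal{C}^\alpha$ of the continuous spectrum --- is a detail the paper's proof leaves implicit, but it does not change the approach.
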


        \begin{proof}
            The result is a combination of Theorems \ref{thm:formula_defect_freq} and \ref{thm:time_dependent}. Using Theorem \ref{thm:formula_defect_freq}, one obtains the defect parameter $V^{def}_1(\omega^-)$ such that $\omega^-$ occurs as a defect mode eigenfrequency. In order to construct a time-dependent structure which admits $u(x,t)$ as a solution, it follows from Theorem \ref{thm:time_dependent} that the only possible choice of compressibility and density for positive times is given by $\kappa^+ = b\kappa^-,\rho^+=\rho^-/b$, respectively. Thus, if the material parameters for positive times satisfy
            $$ (\delta^m_i)^+((v^m_i)^+)^2 = \frac{(\kappa_m^i)^+}{(\rho_b)^+} = b^2 \frac{(\kappa_m^i)^-}{(\rho_b)^-} = b^2 (\delta_m^i)^-((v^m_i)^-)^2 \quad \text{ for } m \in \mathbb{Z}, i = 1,2,$$
            that is, if the material parameters for positive times are given by $b^2 V_1,b^2 V_2,b^2 V_1^{def}$, then $\omega(t)$ is a time-dependent defect mode eigenfrequency of the associated instantly changing material.
        \end{proof}

        \subsection{Spatio-temporal localization}\label{subsec:Spatio-temporal_localization}

        In this section, the results from Theorem \ref{thm:time_dep_material_design} will be used to design (and thus show existence of) materials which admit spatio-temporally localized modes.
        The following definition will clarify what we understand by a spatio-temporally localized mode.

        \begin{definition}[Spatio-temporal localization]
            We  call a quasi-harmonic solution $u(x,t)=v(x)\exp(-i\omega(t)t)$ to the wave equation \eqref{eq:wave_eq} \emph{spatio-temporally localized}, if $v(x)$ is a localized mode and if for almost every $x \in \mathbb{R}^3$ the function $t \mapsto u(x,t)$ is square integrable.
        \end{definition}

        The following characterization will help for the design of instantly changing materials which admit spatio-temporally localized modes.

        \begin{prop}\label{prop:spatio-temp_loc}
            Let $u(x,t)=v(x)\exp(-i\omega(t)t)$ be a quasi-harmonic solution to the wave equation \eqref{eq:wave_eq} of an instantly changing material given by \eqref{eq:time_dep_rho_kappa}.
            Then, $u(x,t)$ is a spatio-temporally localized if and only if $v(x)$ is a localized mode of the associated Helmholtz problem for negative and positive times and if $\Im(\omega^-)>0$ and $\Im(\omega^+)<0$.
        \end{prop}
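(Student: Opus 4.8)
The plan is to reduce the claimed equivalence to a statement purely about the temporal factor $\exp(-i\omega(t)t)$. Both the definition of spatio-temporal localization and the right-hand side of the asserted equivalence contain the requirement that $v$ be a localized mode; since this condition sits on both sides, I would first strip it off and prove that, under the standing assumption that $v$ is a (nonzero) localized mode, the temporal $L^2$-integrability condition ``$t\mapsto u(x,t)\in L^2(\mathbb{R})$ for a.e.\ $x$'' is equivalent to the two sign conditions $\Im(\omega^-)>0$ and $\Im(\omega^+)<0$.

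The key step is a pointwise computation. Fixing $x\in\mathbb{R}^3$ and using $u(x,t)=v(x)e^{-i\omega^- t}$ for $t<0$ and $u(x,t)=v(x)e^{-i\omega^+ t}$ for $t\geq 0$, together with the elementary identity $\lvert e^{-i\omega t}\rvert^2 = e^{2\Im(\omega)t}$, I would split
\[
    \int_{\mathbb{R}}\lvert u(x,t)\rvert^2\,dt = \lvert v(x)\rvert^2\left(\int_{-\infty}^0 e^{2\Im(\omega^-)t}\,dt + \int_0^\infty e^{2\Im(\omega^+)t}\,dt\right).
\]
The first integral converges precisely when $\Im(\omega^-)>0$ (so that the exponent decays as $t\to-\infty$), the second precisely when $\Im(\omega^+)<0$ (so that it decays as $t\to+\infty$); in that case they evaluate to $1/(2\Im(\omega^-))$ and $-1/(2\Im(\omega^+))$, respectively, both positive.

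With this in hand, both implications follow immediately. For the ``if'' direction, the two sign conditions make the integral finite for \emph{every} $x$ (including the trivial case $v(x)=0$, where $u(x,\cdot)\equiv 0$), so the temporal condition holds for a.e.\ $x$ and $u$ is spatio-temporally localized. For the ``only if'' direction, I would use that a localized mode is by definition nonzero, so $\{x:v(x)\neq 0\}$ has positive Lebesgue measure; choosing any such $x$ at which the temporal integrability holds, the divergence of the above integrals whenever a sign condition fails forces $\Im(\omega^-)>0$ and $\Im(\omega^+)<0$.

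I do not expect a genuine obstacle: the argument is essentially a single-point calculation. The one point that must be stated carefully is that the temporal condition is imposed only ``for a.e.\ $x$'', so in the ``only if'' direction one cannot argue at an arbitrary point but must invoke that $v$ does not vanish almost everywhere—guaranteed precisely by $v$ being a nonzero localized mode—in order to transfer the finiteness of $\int_{\mathbb{R}}\lvert u(x,t)\rvert^2\,dt$ at one admissible point into the sign conditions. Once this is noted, the equivalence is complete.
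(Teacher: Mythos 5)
Your proof is correct and follows essentially the same route as the paper's: both reduce the equivalence to the square integrability of the temporal factor $t \mapsto \exp(-i\omega(t)t)$, which holds precisely when $\Im(\omega^-)>0$ and $\Im(\omega^+)<0$. Your version simply spells out the pointwise integral computation and the measure-theoretic point (that $v$, being a nonzero localized mode, does not vanish a.e.) which the paper leaves implicit.
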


        \begin{proof}
            Let $u(x,t)=v(x)\exp(-i\omega(t)t)$ be spatio-temporally localized. Then, $v(x)$ is a localized mode and for almost every $x \in \mathbb{R}^3$ the function $t \mapsto v(x)\exp(-i\omega(t)t)$ is square integrable. This is precisely the case when $t \mapsto \exp(-i\omega(t)t)$ is square integrable, which occurs precisely when $\Im(\omega^-)>0$ and $\Im(\omega^+)<0$. Conversely, if $v(x)$ is a localized mode and $\Im(\omega^-)>0$ and $\Im(\omega^+)<0$, then $u(x,t)$ is spatio-temporally localized.
        \end{proof}

        This proposition allows us to finally state the following design and existence result for spatio-temporal localization.
        
        \begin{theorem}[Design for spatio-temporal localization]\label{thm:spatio-temporal_loc}
        Assuming the same setting and notation as in Theorem \ref{thm:time_dep_material_design}.
        Denote by $\mathcal{H}\subset \mathbb{C}$ the upper half-plane of the complex numbers given by $\mathcal{H}:= \{x \in \mathbb{C}:\Im(x)>0\}$ and denote by $\overline{\mathcal{H}}$ the lower half plane. 
        Let $$(\omega^-,\omega^+) \in \mathcal{H}\setminus \left(\bigcup_{\alpha\in Y^*}\sigma(\mathcal{C}^\alpha)\cup W\right) \times \overline{\mathcal{H}}.$$
        Then, there exists an instantly changing material that admits $\omega(t) = \chi_{t<0}\omega^- + \chi_{t\geq0}\omega^+$ as a time-dependent defect mode eigenfrequency associated to a spatio-temporally localized quasi-harmonic subwavelength solution $u(x,t)$.
        
        More precisely, let $V_1^{def}(\omega^-) := \Phi_1(\omega^-,\omega^+)$ and $b = \Phi_2(\omega^-,\omega^+)$ and let $V_1$, $V_2$ be as in Theorem \ref{thm:formula_defect_freq}. If we let $V_1,V_2,V_1^{def}$ to be the material parameters for $t<0$ and  $b^2 V_1,b^2 V_2,b^2 V_1^{def}$ to be the material parameters for $t \geq 0$, then this system admits a spatio-temporally localized quasi-harmonic subwavelength solution $u(x,t)$ of the form 
                \begin{align*}
                    u(x,t) = v(x)\exp(-i \omega(t)t) \text{ with } \omega(t) = \begin{cases}
                        \omega^- &\text{if } t < 0,\\
                        \omega^+ &\text{if } t \geq 0.
                    \end{cases}
                \end{align*}
        \end{theorem}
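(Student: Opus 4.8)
The plan is to obtain the statement as a direct combination of Theorem~\ref{thm:time_dep_material_design}, which produces the instantly changing material together with the quasi-harmonic subwavelength solution, and Proposition~\ref{prop:spatio-temp_loc}, which upgrades such a solution to a spatio-temporally localized one as soon as the imaginary parts of the two frequencies have the correct signs. No new analysis of the wave equation is needed; the work is in checking that the hypotheses of these two results are met and in tracking the localization property across the temporal interface.

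First I would check that the prescribed pair $(\omega^-,\omega^+)$ lies in the domain of the map $\Phi$ of Theorem~\ref{thm:time_dep_material_design}. By hypothesis $\omega^-\in\mathcal{H}\setminus(\bigcup_{\alpha\in Y^*}\sigma(\mathcal{C}^\alpha)\cup W)$, which is exactly the first factor of that domain, and $\omega^+\in\overline{\mathcal{H}}$ has $\Im(\omega^+)<0$, so in particular $\omega^+\neq 0$ and thus $\omega^+\in\mathbb{C}^*$, the second factor. Hence $b:=\omega^+/\omega^-$ is a well-defined nonzero constant (note $\omega^-\neq 0$ since it lies in the open upper half-plane), and with $V_1^{def}(\omega^-):=\Psi(\omega^-)$ as in Theorem~\ref{thm:formula_defect_freq}, Theorem~\ref{thm:time_dep_material_design} yields an instantly changing material, with parameters $V_1,V_2,V_1^{def}$ for $t<0$ and $b^2V_1,b^2V_2,b^2V_1^{def}$ for $t\geq 0$, that admits a quasi-harmonic subwavelength solution $u(x,t)=v(x)\exp(-i\omega(t)t)$ whose time-dependent defect mode eigenfrequency is $\omega(t)=\chi_{t<0}\omega^-+\chi_{t\geq0}\omega^+$.

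Next I would verify the hypotheses of Proposition~\ref{prop:spatio-temp_loc}, namely that $v$ is a localized mode of the associated Helmholtz problem for both negative and positive times and that $\Im(\omega^-)>0$, $\Im(\omega^+)<0$. The sign conditions are immediate from $\omega^-\in\mathcal{H}$ and $\omega^+\in\overline{\mathcal{H}}$, and that $v$ is a localized mode for $t<0$ is precisely the content of Theorem~\ref{thm:formula_defect_freq} used to produce $V_1^{def}(\omega^-)$. The step I expect to require the most care is showing that $v$ \emph{remains} a localized mode for $t\geq 0$; this is exactly what the temporal-Snell-law scaling built into Theorem~\ref{thm:time_dep_material_design} secures. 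Because every parameter (background, regular resonators, and defect) is multiplied by the same factor $b^2$ while the frequency passes from $\omega^-$ to $\omega^+=b\omega^-$, the generalized capacitance matrix rescales as $\mathcal{C}^\alpha\mapsto b^2\mathcal{C}^\alpha$, so that $\sigma(b^2\mathcal{C}^\alpha)=b^2\sigma(\mathcal{C}^\alpha)$ and the band-gap condition for $\omega^+$ reduces to the one already assumed for $\omega^-$; moreover the very same spatial profile $v$ (and hence its square-integrability along $x_1$) solves the $t\geq 0$ Helmholtz problem. Thus $v$ is a localized mode for positive times as well.

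Finally, with both families of hypotheses in hand, Proposition~\ref{prop:spatio-temp_loc} gives that $u(x,t)=v(x)\exp(-i\omega(t)t)$ is spatio-temporally localized, and combining this with the fact that $\omega(t)$ is its time-dependent defect mode eigenfrequency (from the second paragraph) yields exactly the asserted design-and-existence statement. The only genuinely non-bookkeeping ingredient is the transfer of the localization property across the temporal boundary, which I would isolate as the short rescaling observation above rather than re-derive from the time-dependent wave equation.
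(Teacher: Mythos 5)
Your proposal is correct and takes essentially the same route as the paper, whose entire proof reads that the theorem ``is a straightforward combination of the results of Proposition~\ref{prop:spatio-temp_loc} and Theorem~\ref{thm:time_dep_material_design}.'' Your additional bookkeeping --- checking that $(\omega^-,\omega^+)$ lies in the domain of $\Phi$ and the rescaling observation $\mathcal{C}^\alpha\mapsto b^2\mathcal{C}^\alpha$ showing $v$ stays localized for $t\geq 0$ --- simply makes explicit what the paper leaves implicit in Theorem~\ref{thm:time_dep_material_design} and Theorem~\ref{thm:time_dependent}.
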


        \begin{proof}     
            This is a straightforward combination of the results of Proposition \ref{prop:spatio-temp_loc} and Theorem \ref{thm:time_dep_material_design}.
        \end{proof}
        
\section{Concluding remarks}
    
    In this paper, we have derived closed formulas for the design of defected metamaterials which admit specified  defect mode eigenfrequencies. This has been achieved both in the setting of static metamaterials and in the setting of instantly changing metamaterials.
Our results do not rely on the fact that a \emph{chain} of resonator dimers is considered. That is, in Lemmas \ref{lem:formula_defect_freq_single} and \ref{lem:formula_defect_freq_double}, the sublattice $\Lambda = (1,0,0)^T\mathbb{Z}$ can be replaced with any $1,2$ or $3$-dimensional sublattice of $\mathbb{R}^3$ without changing the results. The characterizing equations from Lemmas \ref{lem:formula_defect_freq_single} and \ref{lem:formula_defect_freq_double} are the basis for the derivation of \emph{exact formulas} for the design of a defected structure that admits a given frequency $\omega$ or a given frequency pair $(\omega_1,\omega_2)$ as defect mode eigenfrequencies (see Theorems \ref{thm:formula_defect_freq} and \ref{thm:formula_double_defect_freq}). It is important to note that the same procedures as for Theorems \ref{thm:formula_defect_freq} and \ref{thm:formula_double_defect_freq} are possible for three and four prescribed frequencies and three and four defected resonators, respectively. For more than four frequencies the system of equations \eqref{eq:material_design_multiple_defect} cannot be generally solved as in Theorems \ref{thm:formula_defect_freq} and \ref{thm:formula_double_defect_freq}, but has to be solved numerically in order to obtain the defect material parameters.

    Having developed a method to have exact formulas for the metamaterial design of defected materials with prescribed defect mode eigenfrequencies, we have used our findings to obtain in Section \ref{sec:Spatio-temporal_localization} similar results in the setting of instantly changing materials with prescribed time-dependent defect mode eigenfrequency $\omega(t) = \omega^{-}\chi_{t<0}+\omega^{+}\chi_{t\geq0}$. To this end, in Section \ref{subsec:Instantly_changing_material}, we have characterized  in Theorem \ref{thm:time_dependent} the occurrence of quasi-harmonic waves in instantly changing materials. After having established the material design of static materials with prescribed defect mode eigenfrequencies in Theorem \ref{thm:formula_defect_freq} and characterized the occurrence of quasi-harmonic solutions in time-dependent materials in Theorem \ref{thm:time_dependent}, we have presented the design of instantly changing materials which admit localized modes with a prescribed time-dependent defect mode eigenfrequency $\omega(t) = \omega^{-}\chi_{t<0}+\omega^{+}\chi_{t\geq0}$. The result is presented in Theorem \ref{thm:time_dep_material_design}. The design in Section \ref{subsec:Spatio-temporal_localization} of instantly changing materials  which admit spatio-temporally localized modes results as an application of this Theorem. The main finding is presented  in Theorem \ref{thm:spatio-temporal_loc}.

\section*{Acknowledgements}
We would like to thank Bryn Davies for kindly providing his capacitance matrix code for structures with finitely many resonators.

\bibliographystyle{abbrv}
\bibliography{references}{}

\end{document}